\newtheorem{theorem}{Theorem}[section]
\newtheorem{lemma}[theorem]{Lemma}
\newtheorem{proposition}[theorem]{Proposition}
\theoremstyle{definition}
\newtheorem{remark}[theorem]{Remark}
\numberwithin{equation}{section}
\newcommand{\g}{\mathfrak{g}}
\newcommand{\m}{\mathfrak{m}}
\newcommand{\q}{\mathfrak{q}}
\newcommand{\ra}{\rightarrow}
\newcommand{\sudim}{\underline{\text{dim}}\,}
\newcommand{\ev}[1]{{#1}_{\bar{0}}}
\newcommand{\od}[1]{{#1}_{\bar{1}}}
\newcommand{\la}{\lambda}
\newcommand{\ad}{\text{ad}\,}
\newcommand{\gl}{{\mathfrak{gl}}}
\newcommand{\salg}{{\mathfrak{salg}}}
\newcommand{\Z}{ \mathbb Z }
\newcommand{\wh}{\text{Wh}}
\newcommand{\whmod}{\g\texttt{-Wmod}^\chi}
\newcommand{\Wmod}{ \mathcal{W}_\chi\texttt{-mod}}
\newdimen\Hoogte    \Hoogte=12pt
\newdimen\Breedte   \Breedte=12pt
\newdimen\Dikte     \Dikte=0.5pt
\newenvironment{Young}{\begingroup
       \def\vr{\vrule height0.8\Hoogte width\Dikte depth 0.2\Hoogte}
       \def\fbox##1{\vbox{\offinterlineskip
                    \hrule height\Dikte
                    \hbox to \Breedte{\vr\hfill##1\hfill\vr}
                    \hrule height\Dikte}}
       \vbox\bgroup \offinterlineskip \tabskip=-\Dikte \lineskip=-\Dikte
            \halign\bgroup &\fbox{##\unskip}\unskip  \crcr }
       {\egroup\egroup\endgroup}
\def\Diagram#1{\relax\ifmmode\vcenter{\,\begin{Young}#1\end{Young}\,}\else%
              $\vcenter{\,\begin{Young}#1\end{Young}\,}$\fi}
\def\Ddots{\mathinner{\mkern1mu\raise\p@
\vbox{\kern7\p@\hbox{.}}\mkern2mu
\raise4\p@\hbox{.}\mkern2mu\raise7\p@\hbox{.}\mkern1mu}}
\begin{document}
\title[$W$-superalgebras of type $Q$]
{Finite $W$-superalgebras for queer Lie superalgebras}

\author[Lei Zhao]{Lei Zhao}
\address{Department of Mathematics, University of Oklahoma,
Norman, OK 73019} \email{prescheme@ou.edu; prescheme@gmail.com}

\begin{abstract}
We initiate and develop the theory of finite $W$-superalgebras $\mathcal{W}_\chi$ associated to the queer Lie superalgebra $\g=\q(N)$ and a nilpotent linear functional $\chi \in \ev\g^*$. We show that the definition of the $W$-superalgebra is independent of various choices. We also establish a Skryabin type equivalence between the category of $\mathcal{W}_\chi$-modules and a category of certain $\g$-modules.
\end{abstract}

\maketitle
\date{}
  \setcounter{tocdepth}{1}
  \tableofcontents

\section{Introduction}\label{sec:intro}

\subsection{}
The finite $W$-algebras are certain associative algebras associated
to a complex semisimple Lie algebra and a nilpotent element in it.
The study of finite $W$-algebras in special cases dates back to
Lynch's thesis \cite{Ly}, which is in turn a generalization of
Kostant's construction in the regular nilpotent case [Kos]. In full
generality, the finite $W$-algebras were introduced by Premet
\cite{Pr1,Pr2}. In his course to prove the celebrated Kac-Weisfeiler
conjecture, Premet constructed a modular version of finite
$W$-algebras in \cite{Pr1}. He then made the transition in
\cite{Pr2} to define the characteristic zero version of finite
$W$-algebras and showed that they are noncommutative deformations of
the coordinate rings of Slodowy slides \cite{Slo}. Finite $W$-algebras are a very active area of research, we refer the reader to review papers by Wang \cite{Wan} and Losev \cite{Los} on this subject and references therein.

In \cite{WZ1,WZ2}, Wang and the author have developed the
representation theory of Lie superalgebras in prime characteristic
by formulating and proving a super analogue of the Kac-Weisfeiler
conjecture for basic classical Lie superalgebras or a queer Lie
superalgebra. In the course of the proof, the modular finite $W$-superalgebras
were first introduced as a super-generalization of \cite{Pr1}.

The main goal of this paper is to initiate and develop the
theory of finite $W$-superalgebras for the queer Lie superalgebra
over the field of complex numbers. For semisimple Lie algebras, Gan
and Ginzburg provided in \cite{GG} a purely characteristic zero
approach in a generalized form of Premet's result \cite{Pr2} which
allows us to regard finite $W$-algebras as quantizations of Slodowy
slides. Our approach will be a superalgebra generalization of
\cite{GG}. Let us explain the paper in some detail.


\subsection{}
Recall that the queer Lie superalgebra $\g=\q(N)$ has even part
$\ev\g \cong \gl(N)$ and its odd part $\od\g$ is another copy of
$\gl(N)$ under the adjoint action of $\ev\g$. Let $\Pi:\g\ra\g$ be the odd involution which interchanges these copies of $\mathfrak{gl}(N)$ in the obvious way. Let $\chi \in \ev\g^*$ be a nilpotent linear
functional, regard $\chi\in \g^*$ by letting $\chi(\od\g)=0$.
Note that $\g$ admits an {\em odd} nondegenerate invariant bilinear form $(-,-)$. Then the element $E
\in \od\g$ determined by $\chi=(E,-)$ is an odd nilpotent element in $\g$. Set $e=\Pi(E)$.

We start out by defining and classifying all the {\em good} $\Z$-gradings
of $\g$ for $\chi$. (See \cite{FRTW,KRW,EK} for the Lie algebra case.) It
turns out that (Lemma~\ref{lem:grading-good-gl-q}) a $\Z$-grading
$\Gamma: \g =\oplus_{j\in\Z}\g_j$ is good if and only if when
restricted to $\ev\g$, $\Gamma$ is good for $e$.

We fix a good grading $\Gamma: \g=\oplus_{j\in\Z}\g_j$. Note that $\g_{-1}$ has both even and odd components. There is an even non-degenerate skew-supersymmetric bilinear
form
\begin{equation*}
\langle x, y \rangle := (E, [x, y]) = \chi([x, y])
\end{equation*}
on $\g_{-1}$. We pick an isotropic subspace $\mathfrak{l}$ in $\mathfrak{g}_{-1}$ and then define the finite $W$-superalgebra $W_{\mathfrak{l}}$ associated to $\mathfrak{l}$ following
\cite{GG} (which is in turn a generalization of \cite{Pr2}).

It follows from \cite[Lemma~1.1]{EK} that there exist
$h\in\g_{0,\bar{0}}$ and $f\in\g_{-2,\bar{0}}$ such that $\{e,h,f\}$
form an $\mathfrak{sl}(2)$-triple. Put $F=\Pi(f)$. We define the
Slodowy slide $\mathcal{S}$ of $\g$ through $\chi$ to be the affine
superscheme $\chi+\text{ker}\,\text{ad}^*F \subseteq \g^*$, where
$\text{ad}^*$ denotes the coadjoint action of $\g$ on $\g^*$.
Introduce the Kazhdan filtration on $\mathcal{W}_{\mathfrak{l}}$ following \cite{GG}. Denote
the associated graded superalgebra by $\text{gr}^K\mathcal{W}_{\mathfrak{l}}$. Following \cite{GG}, we are able to show that there is
a canonical isomorphism (Theorem~\ref{thm:isom-nu})
\[
\nu: \text{gr}^K\mathcal{W}_{\mathfrak{l}} \cong \mathbb{C}[\mathcal{S}].
\]
To establish this isomorphism we need some cohomology vanishing result for Lie superalgebras, which relies on the acyclicity of the super Koszul complex. It follows that different choices of isotropic subspaces
$\mathfrak{l}$ give rise to isomorphic $W$-superalgebras
$\mathcal{W}_{\mathfrak{l}}$. Combining with the classification of
good grading for $\chi$ and work of Brundan-Goodwin \cite{BG}, we
conclude that the definition of $W$-superalgebra is also independent
of the good gradings. We may fix a Lagrangian subspace
$\mathfrak{l}$ and use
$\mathcal{W}_\chi$ for $W_{\mathfrak{l}}$ without causing any
confusion.

Let $\Wmod$ be the category of finitely generated
$\mathcal{W}_\chi$-modules and denote $\whmod$ the category of
finitely generated $\g$-modules upon which $(x-\chi(x))$ act locally
nilpotently for all $x\in \m$. If $M \in \whmod$, then the subspace
\[
\text{Wh}(M)=\{v\in M \vert\; (x-\chi(x))v=0,\forall x\in \m\},
\]
is a $\mathcal{W}_\chi$-module, hence $M \mapsto \text{Wh}(M)$ is a
functor from $\whmod$ to $\Wmod$. Also, we have the functor
$Q_\chi\otimes_{\mathcal{W}_\chi}-$ from $\Wmod$ to $\whmod$. Using
the approach of \cite{GG}, we are able to establish
(Theorem~\ref{thm:skry}) a Skryabin type equivalence (see
\cite{Skr}) for $\g$, that is, the functors $\text{Wh}(-)$ and
$Q_\chi\otimes_{\mathcal{W}_\chi}-$ are quasi-inverse equivalences
between $\whmod$ and $\Wmod$. We will also call this the Skryabin
equivalence (for $\g$).

We remark that the original approach to Skryabin equivalence by Skryabin \cite{Skr} superizes without difficulties here, which allows the ``finite generation'' assumption to be removed.


\subsection{}
The paper is organized as follows. In Section~\ref{sec:basics} we classify good $\Z$-gradings
for $\g$. In Section~\ref{sec:defW}, we
prove that the definition of the finite $W$-superalgebra does not
depend on the choice of isotropic subspaces or good gradings.

Throughout we work with the field $\mathbb{C}$ of complex numbers as
the ground field.

For a superspace (i.e. $\Z_2$-graded vector space) $M = \ev{M} \oplus
\od{M}$, write $|v| \in \Z_2$ for the parity (or degree) of $v
\in M$, which is implicitly assumed to be $\Z_2$-homogeneous. The
graded dimension of $M$ will be
denoted by $\sudim M = \dim \ev M \vert \dim \od M$.

By vector spaces, derivations, subalgebras, ideals, modules,
submodules, and commutativity, etc. we mean in the super sense
unless otherwise specified.

\noindent {\bf Acknowledgments.} The author is very grateful to Weiqiang Wang for suggesting the problem as well as
offering valuable advice. He thanks Jon Kujawa for helpful discussions. He also thanks the anonymous referee for valuable suggestions.

\section{Preliminaries}\label{sec:basics}
\subsection{Algebraic supergroup $Q(N)$ and the queer Lie superalgebra $\q(N)$}
We use the usual functorial language for superschemes and supergroups as in e.g. \cite[Section~2]{BK}.

The algebraic supergroup $G=Q(N)$ is the functor which associates to
any $A$ in the category $\salg$ of commutative $\mathbb{C}$-superalgebras with even homomrophisms the group of all invertible $2N \times 2N$
matrices (under usual matrix multiplication) of the form
\begin{equation}\label{equ:ele-Q(n)}
g=\begin{pmatrix}
S&S'\\
-S' &S\\
\end{pmatrix},
\end{equation}
where $S$ is an $N\times N$ matrix with entries in $\ev A$ and $S'$
is an $N\times N$ matrix with entries in $\od A$. The morphism
$G(f): G(A) \ra G(B)$ associated to a morphism $f: A \ra B$ is given
entry-wise on elements of the from (\ref{equ:ele-Q(n)}). The
underlying purely even group is isomorphic to $GL(n)$, which can be
defined to be the functor that associates $A \in \salg$ to the group
of invertible matrices of the form (\ref{equ:ele-Q(n)}) with $S'=0$.

The Lie superalgebra $\g=\mathfrak{q}(N)=\text{Lie}(Q(N))$ consists of
all matrices of the form
\begin{equation}\label{equ:ele-q(n)}
X=\begin{pmatrix}
S&S'\\
S' &S\\
\end{pmatrix},
\end{equation}
where $S$ and $S'$ are $N \times N$ matrices over $\mathbb{C}$, and
such an element is even if $S'=0$ or odd if $S=0$. The
multiplication $[.,.]$ is defined by the supercommutator of
matrices. If we let $P=\begin{pmatrix}0&I_N\\-I_N&0 \end{pmatrix}
\in \gl(N|N)$, which is the Lie superalgebra of $(N|N)\times (N|N)$
blocked matrices, then $\g$ is the (super)centralizer of $P$ in
$\gl(N|N)$. The Lie superalgebra $\g$ admits an odd nondegenerate
$\g$-invariant (super)symmetric bilinear form, which is given by
\[
(x,y):= \text{otr}(xy) \text{ for }x, y \in \g,
\]
where $xy$ denotes the matrix product, and $\text{otr}$ denotes the
odd trace given by
\[
\text{otr}\begin{pmatrix}
A&B\\
B&A\\
\end{pmatrix} =\text{trace}(B).
\]
For $1\leq i,j \leq N$, write $e^{\bar{0}}_{i,j}$ (resp. $e^{\bar{1}}_{i,j}$) the element
in $\g$ with $1$ on the $(i,j)$-entry of $S$ (resp. $S'$) and $0$
elsewhere. Define a linear map
\[
\Pi : \g \ra \g,
\]
which interchanges $e^{\bar{0}}_{i,j}$ and $e^{\bar{1}}_{ij}$ for all $1\leq i,j \leq N$.

\subsection{The good $\Z$-gradings}\label{sec:gg}
By a $\Z$-grading of $\g$ we always mean a $\Z$-grading
\[
\Gamma:  \g=\oplus_{j \in \Z} \g_j,
\]
of $\g$ as a Lie superalgebra which is compatible with the
$\Z_2$-grading, i.e. $\g_j=\g_{j,\bar{0}} \oplus \g_{j,\bar{1}}$,
and $[\g_i, \g_j] \subseteq \g_{i+j}$ for all $i, j\in \Z$.

For $k \in \Z$, we shall denote $\g_{>k}=\oplus_{j>k}\g_j$.
Similarly, we define $\g_{\geq k}$, $\g_{< k}$, $\g_{\leq k}$, and
$\g_{\neq k}$.

Let $\chi \in \ev \g^*$ be a nilpotent linear functional and we
always regard $\chi \in \g^*$ by setting $\chi(\od \g) = 0$. Denote
the centralizer of $\chi$ in $\g$ by $\g_{\chi} = \g_{\chi,\bar 0} +
\g_{\chi,\bar 1}$, where $\g_{\chi,i} = \{ y \in \g_i \vert \;
\chi([y, \g]) = 0 \}$ for $i \in \Z_2$.

Such a grading $\Gamma$ is called {\em good} for $\chi$ if the
center $\mathfrak{z}(\g)=\mathbb{C}\cdot I_{2n}$ of $\g$ is contained
in $\g_0$ and if it satisfies the following two conditions.
\begin{equation}\label{equ:gg1}
\chi(\g_{\neq -2})=0
\end{equation}
\begin{equation}\label{equ:gg2}
\g_\chi \subseteq \g_{\geq 0}.
\end{equation}

\begin{lemma}\label{lem:grading-inner}
For any $\Z$-grading $\Gamma: \g=\oplus_{j \in \Z}\g_j$ such that
$\mathfrak{z}(\g) \subseteq \g_0$, there exists a semisimple element
$h_\Gamma \in [\ev\g,\ev\g]$ such that $\Gamma$ coincides with the
eigenspace decomposition of $\text{ad}\,h_\Gamma$, i.e. $\g_j=
\{x\in \g\vert\;[h_\Gamma,x]=jx\}$.
\end{lemma}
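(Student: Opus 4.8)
The plan is to rephrase the grading in terms of its \emph{grading derivation} and to show that this derivation is inner, implemented by a semisimple element of $[\ev\g,\ev\g]$. Let $D\in\Der(\g)$ be the even operator acting as multiplication by $j$ on $\g_j$; it is a derivation because $[\g_i,\g_j]\subseteq\g_{i+j}$, it is semisimple as a linear operator (diagonalizable with integer eigenvalues), and the hypothesis $\mathfrak{z}(\g)\subseteq\g_0$ says exactly that $D$ annihilates $\mathfrak{z}(\g)=\C I_{2N}$. It suffices to produce a semisimple $h_\Gamma\in[\ev\g,\ev\g]$ with $D=\ad h_\Gamma$, for then $\g_j$ is precisely the $j$-eigenspace of $\ad h_\Gamma$.

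First I would treat the even part. Since $D$ is even it preserves $\ev\g\cong\gl(N)$, hence also its derived subalgebra $[\ev\g,\ev\g]\cong\mathfrak{sl}(N)$. Because every derivation of the semisimple Lie algebra $\mathfrak{sl}(N)$ is inner (first Whitehead lemma, i.e. $H^1(\mathfrak{sl}(N),\mathfrak{sl}(N))=0$), we get $D|_{[\ev\g,\ev\g]}=\ad h_0$ for a unique $h_0\in[\ev\g,\ev\g]$. Both $D$ and $\ad h_0$ kill the center $\C I_{2N}$ of $\ev\g$, and $\ev\g=[\ev\g,\ev\g]\oplus\C I_{2N}$, so $D=\ad h_0$ on all of $\ev\g$. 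Put $D':=D-\ad h_0$, an even derivation of $\g$ with $D'|_{\ev\g}=0$; the derivation identity then forces $D'([x,y])=[x,D'y]$ for $x\in\ev\g,\ y\in\od\g$, i.e. $D'|_{\od\g}$ is $\ev\g$-equivariant.

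Next I would pin down $D'|_{\od\g}$. The map $\Pi$ identifies $\od\g$ with $\ev\g$ as $\ev\g$-modules, so $\od\g=\Pi\bigl([\ev\g,\ev\g]\bigr)\oplus\C\,\Pi(I_{2N})$ is a direct sum of an irreducible module isomorphic to $\mathfrak{sl}(N)$ and a trivial one; for $N\ge 2$ these are non-isomorphic, so by Schur's lemma $D'|_{\od\g}$ acts by a scalar $a$ on the first summand and a scalar $b$ on the second. To see $a=b=0$ I would feed odd--odd brackets to $D'$, using that such brackets lie in $\ev\g$ where $D'$ vanishes: choosing $t\in[\ev\g,\ev\g]$ with $[\Pi(t),\Pi(t)]\neq 0$ (possible for $N\ge 2$), the identity $0=D'\bigl([\Pi(t),\Pi(t)]\bigr)=2\,[D'\Pi(t),\Pi(t)]=2a\,[\Pi(t),\Pi(t)]$ gives $a=0$, while $0=D'\bigl([\Pi(I_{2N}),\Pi(I_{2N})]\bigr)=2b\,[\Pi(I_{2N}),\Pi(I_{2N})]=4b\,I_{2N}$ gives $b=0$. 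Hence $D'=0$ and $D=\ad h_0$. Finally $h_0\in[\ev\g,\ev\g]$ and it is semisimple: comparing Jordan decompositions, since $\ad h_0=D$ is a semisimple operator the nilpotent part of $h_0$ must lie in $\mathfrak{z}(\g)$, hence is zero. Take $h_\Gamma:=h_0$.

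I expect the main obstacle to be the last step, where the \emph{odd} structure of $\q(N)$ is genuinely used: the potential extra $\ev\g$-module endomorphism of $\od\g$ is obstructed by the nondegeneracy of the odd--odd bracket, which is exactly what distinguishes $\q(N)$ from a direct sum $\gl(N)\oplus\gl(N)$. One should also dispatch the degenerate case $N=1$ separately, where $[\ev\g,\ev\g]=0$, $\mathfrak{z}(\g)=\ev\g$, and the relation $[\od\g,\od\g]=\ev\g$ forces the grading to be trivial, so the statement holds with $h_\Gamma=0$; the rest of the argument assumes $N\ge 2$ so that $\mathfrak{sl}(N)$ is simple, nonabelian, and not isomorphic to the trivial module.
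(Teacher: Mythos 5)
Your proof is correct, and while it shares the paper's opening move --- pass to the degree operator, observe that it is an even derivation, and use the hypothesis $\mathfrak{z}(\g)\subseteq\g_0$ to realize its restriction to $\ev\g$ as $\ad h_0$ for a semisimple $h_0\in[\ev\g,\ev\g]$ --- it handles the odd part by a genuinely different argument. The paper works directly with graded components: it first shows by an explicit bracket computation that $C=\Pi(I_{2N})$ lies in $\g_0$, and then uses the identity $[X,C]=2\Pi(X)$ to force every odd $X$ with $\Pi(X)$ homogeneous to be homogeneous of the same degree. You instead form the difference $D'=\partial-\ad h_0$, observe that it is an $\ev\g$-module endomorphism of $\od\g$, reduce it to two scalars via Schur's lemma applied to the decomposition $\od\g\cong\mathfrak{sl}(N)\oplus\C$, and kill both scalars using the odd--odd bracket $[\Pi(A),\Pi(B)]=AB+BA$. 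Both arguments ultimately exploit the same feature of $\q(N)$, namely that the anticommutator pairing $\od\g\times\od\g\to\ev\g$ is nonzero on the relevant elements, but yours bypasses the intermediate verification that $C\in\g_0$ (it falls out as your statement $b=0$), makes the mechanism more transparent, and isolates exactly where $N\ge 2$ is needed, dispatching $N=1$ separately --- a case the paper leaves implicit. The price is the extra module-theoretic input (irreducibility of the adjoint representation of $\mathfrak{sl}(N)$ and non-isomorphism with the trivial module), which the paper's more elementary component calculation does not require.
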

\begin{proof}
The degree operator $\partial :\g \ra \g$ which sends $x \ra jx$ for
$x \in \g_j$ is a derivation of $\g$, hence a derivation of $\ev\g$.
Since we require $\mathfrak{z}(\g)\subseteq \g_0$, the grading on
$\ev \g$ is given by $\text{ad}\,h_\Gamma$ for some semisimple
element $h_\Gamma \in [\ev\g, \ev\g]$.

Write $c=I_{2n} \in \ev\g$, and let $C=\Pi(c)$. We claim that $C \in
\g_0$. Indeed, write $C=\sum_j C_j$, with $C_j \in \g_j$. Then for
any $k$
\[
\partial([C, C_k])= \sum_jj[C_j,C_k] +k[C,C_k]=\sum_j(j+k)[C_j,C_k];
\]
On the other hand, we have $\text{ad}\,h_\Gamma(C)=0$ and so $[h_\Gamma, C_k]=0$ for any $k$. Thus,
\[
\partial([C,C_k]) = [h_\Gamma, [C, C_k]]=[[h_\Gamma,C], C_k] + [C,[h_\Gamma, C_k]]=0
\]
It follows that
\[
(j+k)[C_j,C_k]=0, \quad \text{for all }k,j.
\]
In particular, we have
\[
[C_j,C_k]=0, \quad \text{ when }j+k \neq 0.
\]
Now suppose $C_k \neq 0$ for some $k\neq 0$. Then
\[
[C,C_k]=\sum_j [C_j, C_k]=[C_{-k}, C_k]=[C_k, C_{-k}].
\]
A similar calculation shows
\[
[C,C_{-k}] =[C_k, C_{-k}].
\]
It follows that $[C, C_k-C_{-k}]=0$. But this can happen if and only
if $C_k=C_{-k}=0$. Thus we have $C \in \g_0$, as desired.

Now let $X \in \od \g$ such that $x =\Pi(X)$ lies in $\g_i$ for some
$i \in \Z$. Write $X=\sum_j X_j$ with $X_j \in \g_j$. First we have
\[
\partial([X, C])=\partial(2x)=i(2x)=i([X,C])=\sum_j i[X_j, C].
\]
On the other hand, since $C \in \g_0$, we have
\[
\partial([X,C])=\sum_j j[X_j, C].
\]
It follows that
\[
i[X_j, C]=j[X_j,C] \quad \text{ for all }j.
\]
This can only be possible when $X_j=0$ for $j \neq i$. Hence $X=X_i$
and $\partial(X)=\text{ad}\, h_\Gamma(X)$. The lemma thus follows.
\end{proof}

Let $E$ be the element in $\od \g$ defined by the relation
$\chi=(E,.)$, and let $e=\Pi(E)$. The defining condition
(\ref{equ:gg1}) and (\ref{equ:gg2}) are easily seen to be equivalent
to the following
\begin{equation*}\tag{\ref{equ:gg1}'}
E \in \g_2;
\end{equation*}
\begin{equation*}\tag{\ref{equ:gg2}'}
\g_E \subseteq \g_{\geq 0}.
\end{equation*}

It also follows from Lemma~\ref{lem:grading-inner} that
\begin{equation}\label{equ:gg3}
(\g_i, \g_j)=0 \quad \text{unless }i+j=0.
\end{equation}

Lemma~\ref{lem:grading-inner} tells us we need not to distinguish a $\Z$-grading of $\g$
and a $\Z$-grading for its even part $\ev\g=\gl(N)$. Given a
$\Z$-grading of $\gl(N)$, we define a $\Z$-grading on $\g$ by given
$\od \g$, which is an adjoint copy of $\gl(N)$, the same graded
structure but with odd parity; and any $\Z$-grading of $\g$ is
obtained this way. Next we show that that we need not to distinguish a good $\Z$-grading of $\g$
and a good grading of $\gl(N)$. In order to do that, we briefly recall the classification of good gradings for $\ev\g=\mathfrak{gl}(N)$ using pyramids. For more details on this matter, please see \cite[Section~4]{EK} and \cite[Section~6]{Wan}.

Given a partition $\la =(0<p_1 \leq p_2 \leq \ldots \leq p_n)$ of $N$, we
construct a combinatorial object, called pyramids (of shape
$\la$) as follows.

We start with a (lowest) row of $p_n$ boxes of size $2$ units
by $2$ units, with column numbers $1-p_n, 3-p_n, \cdots, p_n
-1$. Then, we add a (second to last) row of $p_{n-1}$ boxes on top of the lowest row. The rule is: keep the stair shape with permissible shifts by
integer units. Then we continue the process with the same rule until we have added all $n$ rows of boxes. In the example of $\la=(2,2,3)$,
we obtain three pyramids below (where the column numbers are
also indicated).
$$
\begin{picture}(220,70)

\put(0,10){\line(1,0){60}} \put(0,30){\line(1,0){60}}
\put(20,50){\line(1,0){40}} \put(20,70){\line(1,0){40}}
\put(60,10){\line(0,1){60}} \put(0,10){\line(0,1){20}}
\put(20,10){\line(0,1){40}} \put(40,10){\line(0,1){60}}
\put(20,50){\line(0,1){20}}
 \put(30,10){\circle*{3}}
\put(5,0){-2} \put(27,0){0} \put(48,0){2}

\put(80,10){\line(1,0){60}} \put(80,30){\line(1,0){60}}
\put(80,50){\line(1,0){40}} \put(80,70){\line(1,0){40}}
\put(140,10){\line(0,1){20}} \put(80,10){\line(0,1){60}}
\put(100,10){\line(0,1){60}} \put(120,10){\line(0,1){60}}
 \put(110,10){\circle*{3}}
\put(85,0){-2} \put(107,0){0} \put(128,0){2}

\put(160,10){\line(1,0){60}} \put(160,30){\line(1,0){60}}
\put(170,50){\line(1,0){40}} \put(170,70){\line(1,0){40}}
\put(220,10){\line(0,1){20}} \put(160,10){\line(0,1){20}}
\put(180,10){\line(0,1){20}} \put(200,10){\line(0,1){20}}
\put(170,30){\line(0,1){40}} \put(190,30){\line(0,1){40}}
\put(210,30){\line(0,1){40}} \put(190,10){\circle*{3}}
\put(165,0){-2} \put(187,0){0} \put(208,0){2}
\end{picture}
$$

Given a pyramid $P$ of shape $\la$, let us fix a labeling by
numbers $\{1,2,\ldots, N\}$  of the $N$ boxes in $P$. A convenient
choice is to label downward from left to right in an increasing
order.

Let $\g=\mathfrak{q}(N)$ with natural module $V$. Write $\{v^{\bar
0}_i=v_i \vert\; i=1,\ldots,N\}$ (resp. $\{v^{\bar 1}_j \vert \;
j=1,\ldots, N\}$) the standard basis for $\ev V=\mathbb{C}^N$ (resp.
$\od V=\mathbb{C}^N$), where $\bar 0, \bar 1 \in \Z_2$. Let $e^P$ be the nilpotent element in $\mathfrak{g}_{\bar{0}}=\mathfrak{gl}(N)$ which sends a vector $v^{\epsilon}_i$ to
$v^{\epsilon}_{L(i)}$ where $\epsilon \in\Z_2$ and $L(i)$ denotes the label to the
left of $i$ in the labelled pyramid $P$ (by convention
$v^\epsilon_{L(i)} =0$ whenever $ L(i)$ is not defined).
Note that $e^P$ has $J_\la$ as its Jordan form.

A $\Z$-grading $\Gamma^P$ of $\ev\g=\mathfrak{gl}(N)$ is determined by letting $\deg (e^\epsilon_{i,j})= \text{col}(j)-\text{col}(i)$, where $\text{col}(i)$ denotes the column number of the box labelled by $i$ in $P$. According to \cite{EK}, all good $\Z$-grading of $\ev\g=\gl(N)$ are obtained this way.

\begin{lemma}\label{lem:grading-good-gl-q}
A $\Z$-grading $\Gamma : \g =\oplus_{j\in\Z}\g_j$ is good for $\chi$
if and only if when restricted to $\ev\g$, $\Gamma$ is a good
grading for $e$ of $\ev\g$.
\end{lemma}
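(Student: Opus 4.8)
The plan is to transport the whole question to the even part of $\g$ via Lemma~\ref{lem:grading-inner}, and then isolate the single point where the odd part genuinely intervenes. By Lemma~\ref{lem:grading-inner} and the discussion after it, prescribing a $\Z$-grading $\Gamma$ of $\g$ with $\z(\g)\subseteq\g_0$ amounts to prescribing a $\Z$-grading $\gl(N)=\bigoplus_j\gl(N)_j$ of $\ev\g\cong\gl(N)$; under this correspondence $\Pi$ is degree-preserving, the associative product of $\gl(N)$ respects the grading, and $\z(\g)=\C I_{2n}\subseteq\g_0$ is equivalent to $I_N\in\gl(N)_0$. Since $E=\Pi(e)$ and, as noted just before the lemma, conditions \eqref{equ:gg1}--\eqref{equ:gg2} are equivalent to ``$E\in\g_2$'' and ``$\g_E\subseteq\g_{\geq0}$'', the first of these is further equivalent to $e\in\gl(N)_2$. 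So, under the standing hypotheses $\z(\g)\subseteq\g_0$ and $E\in\g_2$, I am reduced to comparing $\g_E\subseteq\g_{\geq0}$ with $\gl(N)_e\subseteq\gl(N)_{\geq0}$, where $\gl(N)_e$ is the centralizer of $e$ in $\gl(N)$.

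The next step is to compute $\g_E$ in terms of $e$. A direct matrix calculation with \eqref{equ:ele-q(n)} gives $[x,E]=\Pi([x,e])$ for $x\in\ev\g$ and $[\Pi(y),E]=ey+ye$ for $y\in\gl(N)$, so that $\g_{E,\bar0}=\gl(N)_e$ while $\g_{E,\bar1}=\Pi(A_e)$, where
\[
A_e:=\{\,y\in\gl(N)\ \vert\ ey+ye=0\,\}
\]
is the \emph{anticommutant} of $e$. As $\Pi$ is degree-preserving, ``$\g_E\subseteq\g_{\geq0}$'' becomes equivalent to the conjunction $\gl(N)_e\subseteq\gl(N)_{\geq0}$ \emph{and} $A_e\subseteq\gl(N)_{\geq0}$. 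One direction of the lemma is then immediate: if $\Gamma$ is good for $\chi$ then in particular $\gl(N)_e\subseteq\gl(N)_{\geq0}$, so $\Gamma$ restricted to $\ev\g$ is good for $e$. The entire content of the lemma therefore sits in the converse, which comes down to the implication $\gl(N)_e\subseteq\gl(N)_{\geq0}\ \Longrightarrow\ A_e\subseteq\gl(N)_{\geq0}$.

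For that implication the idea is to intertwine the commutant and the anticommutant of $e$. Concretely, I would produce an invertible element $s\in\gl(N)_0$ with $se=-es$. Then right multiplication $R_s\colon\gl(N)\to\gl(N)$, $y\mapsto ys$, is a degree-zero linear bijection (degree zero since $s\in\gl(N)_0$ and the product is graded, bijective since $s$ is invertible), and one checks at once that $R_s$ maps $\gl(N)_e$ onto $A_e$; hence $A_e=R_s(\gl(N)_e)\subseteq R_s(\gl(N)_{\geq0})=\gl(N)_{\geq0}$, as desired. To obtain $s$, observe that $-e$ is again nilpotent of the same Jordan type as $e$, lies in $\gl(N)_2$, and has centralizer equal to $\gl(N)_e\subseteq\gl(N)_{\geq0}$; hence $\Gamma$ is a good grading for $-e$ too. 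Since $\Gamma|_{\ev\g}$ is one of the pyramid gradings classified in \cite{EK}, the nilpotents of $\gl(N)_2$ whose centralizer lies in $\gl(N)_{\geq0}$ form a single orbit under the degree-zero subgroup of $GL(N)$; therefore $-e=ses^{-1}$ for some $s$ preserving $\Gamma$, and such an $s$ lies in $\gl(N)_0\cap GL(N)$. The relation $ses^{-1}=-e$ is exactly $se=-es$.

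I expect the last paragraph to be the only real obstacle. The even good-grading hypothesis directly controls only the commutant $\gl(N)_e$, while the odd part $\Pi(A_e)$ of the centralizer $\g_E$ is governed by the anticommutant $A_e$; bridging the two forces one to exhibit the intertwining element $s$, and the cleanest way I see to guarantee it sits in degree zero is to invoke the single-orbit statement for good-position nilpotents from \cite{EK}. Everything else---the descent from $\g$ to $\ev\g$, the reformulation of \eqref{equ:gg1}--\eqref{equ:gg2}, and the computation of $\g_E$---is routine once Lemma~\ref{lem:grading-inner} is available.
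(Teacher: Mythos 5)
Your reduction is the same as the paper's: both proofs use Lemma~\ref{lem:grading-inner} to see that $\Pi$ preserves degrees, note that $\g_{E,\bar 0}=\gl(N)_e$, identify $\Pi^{-1}(\g_{E,\bar 1})$ with the anticommutant $A_e=\{y\in\gl(N)\mid ey+ye=0\}$, and thereby reduce everything to showing $\gl(N)_e\subseteq\gl(N)_{\geq 0}\Rightarrow A_e\subseteq\gl(N)_{\geq 0}$. Where you diverge is in how that implication is proved. The paper works combinatorially: it passes to a pyramid grading with $e=e^P$, writes down explicit elements $\hat z_{j,i;k_j}$ of $A_e$ that are visibly homogeneous of non-negative degree, and then uses the dimension formula $\dim\g_{E,\bar 1}=\sum_{i,j}\min(p_i,p_j)$ from \cite[Proposition~4.1]{WZ2} to conclude these exhaust $A_e$. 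You instead produce an invertible $s\in\gl(N)_0$ with $ses^{-1}=-e$ and observe that right multiplication by $s$ is a degree-zero bijection carrying $\gl(N)_e$ onto $A_e$; this is correct (the check $e(ys)=-(ys)e$ goes through, and $s$ commutes with $h_\Gamma$ so $R_s$ preserves the grading), and it buys you a structural explanation of why the commutant and anticommutant have the same graded dimensions, with no appeal to the dimension count in \cite{WZ2}. The one step you should shore up is the existence of $s$: the single-orbit statement you invoke follows from the fact that for a good grading $\mathrm{ad}\,e:\g_0\to\g_2$ is surjective (so both $G_0\cdot e$ and $G_0\cdot(-e)$ are open in the irreducible space $\g_2$, hence coincide); you should either cite that surjectivity explicitly (it is part of the Elashvili--Kac characterization, cf.\ Proposition~\ref{prop:gg-properties}) or, more simply, note that for a pyramid grading one can take $s$ to be the diagonal matrix whose $(i,i)$-entry is $(-1)^{(\mathrm{col}(i)-\mathrm{col}(1))/2}$, which lies in $\gl(N)_0$ and anticommutes with $e^P$ since $e^P$ shifts column numbers by $2$. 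With that made precise, your argument is a complete and somewhat cleaner alternative to the paper's.
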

\begin{proof}
Note that a good grading for $\chi$ certainly gives a good
$\Z$-grading for $e$ in $\ev\g$.

Now let $\Gamma$ be a $\Z$-grading of $\g$, such that $e \in \g_2$
and $\g_{e,\bar{0}} \subseteq \g_{\geq 0}$. Then it is clear from
Lemma~\ref{lem:grading-inner} that $E \in \g_2$. We need to show
that $\g_E \subseteq \g_{\geq 0}$. First note that
$\g_{E,\bar{0}}=\g_{e,\bar{0}} \subseteq \g_{\geq 0}$ by assumption.
Secondly, note that $\g_{E,\bar{1}} \subseteq \g_{\geq 0}$ is
equivalent to $\Pi(\g_{E,\bar{1}}) \subseteq \g_{\geq 0}$. But this
is to show that the set of elements in $\ev\g=\gl(N)$ which are
anticommutive with $e$ are non-negatively graded in $\Gamma$. We may assume, without loss of generality,
that the restriction of $\Gamma$ on $\ev\g$ is given by a pyramid
$P$ of shape $\la = (p_1 \leq p_2 \leq \ldots \leq p_n)$ and $e = e^P$. Assume the last boxes on each row are numbered with $t_1,\ldots,t_n$ respectively.

An element $\hat{z}$ that
is anticommutative with $e$ in $\gl(N)$ is determined by the values
$\hat{z}(v_{t_i})$ for $1 \leq i \leq n$, since $\hat{z}(e^kv_{t_i})=(-1)^ke^k\hat{z}(v_{t_i})$.
Consider $\hat{z}_{j,i;k_j} \in \gl(N)$ ($1\leq i,j\leq n$), which anticommute with $e$, such that $\hat{z}_{j,i,k_j}(v_{t_i})=e^{k_j}v_{t_j}$ and $\hat{z}_{j,i;k_j}(v_{{t_{i'}}})=0$ for $1 \leq i' \leq n$, $i' \neq i$.
Then since
$e^{p_i+k_j}v_{t_j}=(-1)^{p_i}\hat{z}_{j,i;k_j}(e^{p_i}v_{t_i})=0$ and $0
\leq k_j<p_j$, $k_j$ has to satisfy the inequality:
\[
p_j >k_j \geq \max(0,p_j-p_i),
\]
and this is sufficient for $\hat{z}_{j,i,k_j}$ to be well-defined
and anticommutative with $e$. For each pair $i,j$, there are
$\min(p_i,p_j)=p_j-\max(0,p_j-p_i)$ choices for such
$\hat{z}_{j,i;k_j}$. So there are in total $\sum_{1\leq i,j\leq
n}\min(p_i,p_j)$ of such $\hat{z}_{j,i;j_k}$ and they are
linearly independent. These elements form a basis of the set of
matrices anticommuting with $e$ since we know from
\cite[Proposition~4.1]{WZ2} that $\dim \g_{E,\bar{1}}=\sum_{1\leq
i,j\leq n}\min(p_i,p_j)$. The elements $\hat{z}_{j,i,k_j}$ are
manifestly homogeneous and non-negatively graded in $\Gamma$
restricted to $\ev\g=\gl(N)$, the lemma thus follows.
\end{proof}

A good grading has following further properties.
\begin{proposition}\label{prop:gg-properties}
Let $\Gamma: \g=\oplus_j \g_j$ be a good grading for a nilpotent
linear functional $\chi=(E,.)$ of $\g$. Then we have
\begin{equation}\label{equ:gg4}
\text{ad}\,E: \g_j \ra \g_{j+2} \text{ is injective for } j\leq -1,
\end{equation}
\begin{equation}\label{equ:gg5}
\text{ad}\, E: \g_j \ra \g_{j+2} \text{ is surjective for } j \geq
-1,
\end{equation}
\begin{equation}\label{equ:gg6}
\sudim \g_E =\sudim \g_0 + \sudim \g_1.
\end{equation}
\end{proposition}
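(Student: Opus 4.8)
The plan is to reduce all three assertions to the defining condition $\g_\chi\subseteq\g_{\geq 0}$ (i.e.\ (\ref{equ:gg2}), equivalently $\g_E\subseteq\g_{\geq 0}$) of a good grading, together with the duality supplied by the odd invariant form; in particular no appeal to the $\gl(N)$ results of \cite{EK} will be needed. Since $E\in\g_2$ is odd, the operator $\ad E$ has degree $2$ and reverses parity, so on $\g_j=\g_{j,\bar{0}}\oplus\g_{j,\bar{1}}$ it splits into two maps $\alpha_j: \g_{j,\bar{0}}\ra\g_{j+2,\bar{1}}$ and $\beta_j: \g_{j,\bar{1}}\ra\g_{j+2,\bar{0}}$. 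Moreover, under $\Pi$ the subspaces $\g_{j,\bar{0}}$ and $\g_{j,\bar{1}}$ are identified with one and the same subspace of $\gl(N)$, so $\dim\g_{j,\bar{0}}=\dim\g_{j,\bar{1}}=:b_j$ for each $j$. For (\ref{equ:gg4}) I would argue directly: the kernel of $\ad E$ restricted to $\g_j$ equals $\g_j\cap\g_E$, and by (\ref{equ:gg2}) this sits inside $\g_{<0}\cap\g_{\geq 0}=0$ as soon as $j\leq -1$; here $\g_E=\{y:[E,y]=0\}$ coincides with $\g_\chi$ by invariance and nondegeneracy of $(-,-)$. Thus both $\alpha_j$ and $\beta_j$ are injective for $j\leq -1$.

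For (\ref{equ:gg5}) I would run the standard duality argument, adapted to the odd form. Because $(-,-)$ is odd and, by (\ref{equ:gg3}), kills $(\g_i,\g_k)$ unless $i+k=0$, it restricts to nondegenerate pairings $\g_{j,\bar{0}}\times\g_{-j,\bar{1}}\ra\C$ and $\g_{j,\bar{1}}\times\g_{-j,\bar{0}}\ra\C$. Using the super-invariance $([E,x],y)=-(-1)^{|x|}(x,[E,y])$ one checks that, relative to these pairings, the transpose of $\alpha_j$ is $-\alpha_{-j-2}$ and the transpose of $\beta_j$ is $\beta_{-j-2}$. Hence $\alpha_j$ (resp.\ $\beta_j$) is surjective if and only if $\alpha_{-j-2}$ (resp.\ $\beta_{-j-2}$) is injective; and for $j\geq -1$ we have $-j-2\leq -1$, so injectivity is already known from the previous step. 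Therefore $\ad E: \g_j\ra\g_{j+2}$ is surjective for $j\geq -1$.

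Finally, for (\ref{equ:gg6}) I would use rank--nullity. Since $\ad E$ has degree $2$, we have $\g_{E,\bar{0}}=\bigoplus_j\Ker\alpha_j$ and $\g_{E,\bar{1}}=\bigoplus_j\Ker\beta_j$; by (\ref{equ:gg4}) only the summands with $j\geq 0$ are nonzero, and for those $\alpha_j,\beta_j$ are surjective by (\ref{equ:gg5}), giving $\dim\Ker\alpha_j=\dim\Ker\beta_j=b_j-b_{j+2}$. The series $\sum_{j\geq 0}(b_j-b_{j+2})$ telescopes to $b_0+b_1$, so $\sudim\g_E=(b_0+b_1)\vert(b_0+b_1)$, which equals $\sudim\g_0+\sudim\g_1$ since $\sudim\g_0=b_0\vert b_0$ and $\sudim\g_1=b_1\vert b_1$. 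I do not expect any serious obstacle here: the only genuine input is the goodness condition (\ref{equ:gg2}), and everything else is formal. The step demanding the most care is the transpose computation in (\ref{equ:gg5}), where one must keep track of the parity-reversal of $\ad E$ and the oddness of $(-,-)$ simultaneously, so as to be certain that the pairings used are the nondegenerate ones and that the transposes land back in the families $\{\alpha_\bullet\}$ and $\{\beta_\bullet\}$. One could instead deduce (\ref{equ:gg6}) from Lemma~\ref{lem:grading-good-gl-q} and the corresponding $\gl(N)$-computation in \cite{EK}, but the self-contained argument above is cleaner.
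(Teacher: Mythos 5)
Your argument is correct and is exactly the standard Lie-algebra argument (injectivity from $\g_E\subseteq\g_{\geq 0}$, surjectivity by duality through the invariant form using (\ref{equ:gg3}), and the telescoping rank count) that the paper's proof defers to by citing \cite[Section~2.5]{Wan}, with the parity bookkeeping for the odd form and the degree-$2$, parity-reversing operator $\ad E$ carried out correctly. No discrepancies with the paper's (omitted) proof.
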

\begin{proof}
The proof is the same as for the Lie algebra case, thus will be
omitted. (see e.g. \cite[Section~2.5]{Wan})
\end{proof}

\section{Finite $W$-superalgebras for queer Lie superalgebras}\label{sec:defW}
\subsection{Definition of $W$-superalgebras}

Fix a nilpotent linear functional $\chi \in \ev\g^*$ and let $\Gamma: \g=\oplus_{i\in\Z}\g_i$ be a good grading for $\chi$. Let $E\in\od\g$ be determined by $\chi(-)=(E,-)$. By
Proposition~\ref{prop:gg-properties}, $\text{ad}\,E:\g_{-1}\ra\g_1$
is bijective. Thus there is a non-degenerate symplectic
(respectively symmetric) bilinear form $\langle \cdot, \cdot\rangle$
on $\g_{-1,{\bar 0}}$ (respectively $\g_{-1,{\bar 1}}$) given by
\begin{equation*}
\langle x, y \rangle := (E, [x, y]) = \chi([x, y]).
\end{equation*}
In other words, the above defines an {\em even} non-degenerate
skew-supersymmetric bilinear form $\langle \cdot, \cdot\rangle$ on
$\g_{-1}$. Fix a ($\Z_2$-graded) isotropic subspace
$\mathfrak{l}=\ev{\mathfrak{l}} \oplus \od{\mathfrak{l}}$ of
$\g_{-1}$ with respect to $\langle.,.\rangle$, and let
$\mathfrak{l}'=\{x \in \g_{-1}\vert\; \langle x,
\mathfrak{l}\rangle=0\}$. We have $\mathfrak{l} \subseteq
\mathfrak{l}'$. Define nilpotent subalgebras $\mathfrak{m} \subseteq
\m'$ as follows:
\[
\m=\mathfrak{l}\oplus \bigoplus_{i \leq -2}\g_i \quad
\text{and}\quad \m'=\mathfrak{l}'\oplus \bigoplus_{i \leq -2}\g_i.
\]
The linear functional $\chi$ restricts to a character on $\m$.
Denote by $\mathbb{C}_\chi$ the corresponding $1$-dimensional
representation of $\m$, and define the {\em generalized Gelfand-Graev module}
\[
Q_{\mathfrak{l}}=U(\g)
\otimes_{U(\m)}\mathbb{C}_\chi=U(\g)/I_{\mathfrak{l}}
\]
be the induced $U(\g)$-module, where $I_{\mathfrak{l}}$ denotes the
left ideal of $U(\g)$ generated by $a-\chi(a)$ for all
$\Z_2$-homogeneous $a\in \m$. Then $I_{\mathfrak{l}}$ is
$\text{ad}\,\m'$-invariant, thus there is an induced
$\text{ad}\,\m'$-action on $Q_{\mathfrak{l}}$. Following \cite{GG}
(which is in turn a generalization of \cite{Pr2}, cf. also
\cite{WZ2}), we define the $W$-superalgebra associated to the
isotropic subspace $\mathfrak{l}$ to be
\[
\mathcal{W}_{\mathfrak{l}}:=(U(\g)/I_{\mathfrak{l}})^{\text{ad}\,\m'}
\cong \{\bar{y} \in U(\g)/I_{\mathfrak{l}} \vert\; [b,y] \in
I_{\mathfrak{l}}, \;\forall b\in \m'\},
\]
where $\bar{y}$ stands for the coset of $y\in U(\g)$ in $U(\g)/I_{\mathfrak{l}}$. The multiplication is given by
$\bar{y}_1\bar{y}_2=\overline{y_1y_2}$ for $\bar{y}_1, \bar{y}_2 \in
\mathcal{W}_{\mathfrak{l}}$.



\subsection{} Write $e=\Pi E$. It follows from
Lemma~\ref{lem:grading-good-gl-q} and \cite[Lemma~1.1]{EK} that
there exist $h \in \g_{0,\bar{0}}$ and $f \in \g_{-2,\bar{0}}$ such
that $\{e,h,f\}$ form an $\mathfrak{sl}(2)$-triple, which will be called the $\Gamma$-graded $\mathfrak{sl}(2)$-triple. Put $H=\Pi h$
and $F=\Pi f$. 

Given a $\Z_2$-graded subspace $M$ of $\g$, we let $M^\perp=\{x \in
\g \vert\; (x,v)=0,\;\forall v\in M\}$, and let
$M^{*,\perp}=\{\xi\in \g^* \vert\; \xi(v)=0,\; \forall v \in M\}$.

\begin{lemma}\label{lem:m-perp}
We have $\m^\perp=[\m', E] \oplus \Pi \g_F$.
\end{lemma}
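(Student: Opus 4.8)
The plan is to identify $\m^\perp$ inside $\g$ by a dimension count together with two explicit inclusions. Since the bilinear form $(-,-)$ on $\g$ is odd, nondegenerate, and $\Pi$-equivariant in the sense that $(\Pi x, y)=\pm(x,\Pi y)$, the perp of a $\Z_2$-graded subspace has complementary (graded) dimension: $\sudim \m^\perp = \sudim \g - \sudim \m$ after the appropriate parity shift coming from the odd form. So first I would set up the bookkeeping: using $(\g_i,\g_j)=0$ unless $i+j=0$ from \eqref{equ:gg3}, decompose $\g=\bigoplus_i \g_i$, observe that $\m=\mathfrak{l}\oplus\bigoplus_{i\le -2}\g_i$ pairs nontrivially only against $\mathfrak{l}\oplus\bigoplus_{i\ge 2}\g_i$ inside $\g_{\le -1}\oplus\g_{\ge 1}$, and conclude $\m^\perp = \mathfrak{l}'' \oplus \bigoplus_{i\ge -1}\g_i$ for a suitable subspace; more precisely, in degree $1$ we get the annihilator of $\mathfrak{l}$ under the pairing $\g_{-1}\times\g_1$, in degrees $\le -2$ we get $0$, in degrees $\ge 2$ we get everything, and in degrees $-1,0,1$ we need care. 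Tracking this gives $\sudim \m^\perp = \sudim\g - \sudim\m$, hence a target dimension for the right-hand side.

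Next I would verify the two inclusions $[\m',E]\subseteq \m^\perp$ and $\Pi\g_F\subseteq\m^\perp$. For the first: for $b\in\m'$ and $v\in\m$, $([b,E],v) = (E,[v,b])$ up to sign by invariance of $(-,-)$; and $[v,b]\in[\m,\m']\subseteq \m'$ (both are subalgebras, $\m$ an ideal in $\m'$), while by construction $\chi = (E,-)$ vanishes on $\m'$ except that $E\in\g_2$ pairs with $\g_{-2}$; the point is that $\langle\mathfrak{l},\mathfrak{l}'\rangle$-isotropy together with $\chi(\g_{\neq -2})=0$ forces $(E,[v,b])=\chi([v,b])=0$ — this is exactly the defining isotropy condition on $\mathfrak{l}$ extended to $\mathfrak{l}'$. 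For the second inclusion: for $X\in\g_F$ (so $[F,X]=0$) and $v\in\m$, I compute $(\Pi X, v) = \pm (X,\Pi v)$, and $\Pi v$ lies in $\Pi\mathfrak{l}\oplus\bigoplus_{i\le -2}\g_i$; using the $\mathfrak{sl}(2)$-representation theory with respect to the graded triple $\{e,h,f\}$ (via $\Pi$, equivalently $\{E,H,F\}$-type relations), $\g_F = \ker(\ad F)$ is the span of lowest weight vectors, and such vectors pair to zero against the image of $\ad F$ applied to appropriate pieces; concretely $(X,\Pi v)=(F\cdot(\text{something}), \ldots)$ reduces to $(X,[F,\ldots])=\pm([F,X],\ldots)=0$. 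The directness of the sum $[\m',E]\oplus\Pi\g_F$ follows because $[\m',E]\subseteq\g_{\ge 1}$ (as $\m'\subseteq\g_{\le -1}$ and $E\in\g_2$, so $[\m',E]\subseteq\g_{\ge 1}$, actually landing in $\g_{\ge 1}$) while $\Pi\g_F$ meets $\g_{\ge 1}$ only where $\g_F$ does, and by \eqref{equ:gg6} and the injectivity in \eqref{equ:gg4} one controls the overlap; I would pin this down via the $\mathfrak{sl}(2)$-decomposition.

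Finally, with both inclusions established and the sum shown to be direct, a dimension comparison closes the argument: I claim $\sudim [\m',E] + \sudim \Pi\g_F = \sudim\m^\perp$. Here $\ad E:\g_{\le -1}\to\g_{\ge 1}$ is injective on $\m'\subseteq\g_{\le -1}$ by \eqref{equ:gg4} (the even part) and the analogous odd statement, so $\sudim[\m',E]=\sudim\m'$, while $\sudim\g_F = \sudim\g_E$ and by \eqref{equ:gg6} this equals $\sudim\g_0+\sudim\g_1$, and $\sudim\Pi\g_F$ is the parity-swap of that. Adding, $\sudim\m' + (\sudim\g_0+\sudim\g_1)$ should match $\sudim\g-\sudim\m$ after accounting for the relation $\sudim\m' = \sudim\m + \sudim(\mathfrak{l}'/\mathfrak{l})$ and $\dim\mathfrak{l}+\dim\mathfrak{l}' = \dim\g_{-1}$; this is the same arithmetic identity that appears in the Lie algebra case in \cite{GG}. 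The main obstacle I anticipate is keeping the parity shifts from the \emph{odd} form $(-,-)$ straight throughout — both in the perp-dimension formula and in checking $\Pi\g_F\subseteq\m^\perp$ — since $\Pi$ is odd and interacts with the odd bilinear form, so the even/odd components of each space get interchanged under pairing; the cleanest route is to fix sign conventions for $(\Pi x,y)$ versus $(x,\Pi y)$ at the outset and use them mechanically.
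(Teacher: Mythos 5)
Your overall plan (two inclusions, trivial intersection, dimension count) is the same as the paper's, but as written the argument contains a reversed grading computation and, more seriously, a gap at the one step that is genuinely new in the super setting. First, the bookkeeping for $\m^\perp$ is backwards: since $\m=\mathfrak{l}\oplus\bigoplus_{i\le -2}\g_i\subseteq\g_{\le -1}$ and $(\g_i,\g_j)=0$ unless $i+j=0$, one gets $\g_{\le 0}\subseteq\m^\perp$, $\m^\perp\cap\g_1=\{x\in\g_1\mid (x,\mathfrak{l})=0\}$, and $\m^\perp\cap\g_j=0$ for $j\ge 2$ --- the opposite of your ``in degrees $\le -2$ we get $0$, in degrees $\ge 2$ we get everything,'' and your conclusion $\m^\perp=\mathfrak{l}''\oplus\bigoplus_{i\ge -1}\g_i$. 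Likewise $[\m',E]\subseteq\g_{\le 1}$ (not $\g_{\ge 1}$), since $\m'\subseteq\g_{\le -1}$ and $E\in\g_2$. Once the correct containment $\g_{\le 0}\subseteq\m^\perp$ is in hand, the inclusion $\Pi\g_F\subseteq\m^\perp$ is a one-liner: $\g_F\subseteq\g_{\le 0}$ (the $F$-counterpart of (\ref{equ:gg2}')) and $\Pi$ preserves the grading by Lemma~\ref{lem:grading-inner}. Your alternative route --- writing $v=[F,w]$ and moving $F$ across the form --- needs $[\Pi X,F]=0$ for $X\in\g_F$, which is not automatic because $\Pi$ is an odd linear map, not a Lie superalgebra automorphism, so that detour is both unnecessary and unjustified as stated.

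The genuine gap is the directness $[\m',E]\cap\Pi\g_F=0$, which you propose to ``pin down via the $\mathfrak{sl}(2)$-decomposition.'' That is not enough here. Because $E$ and $F$ are \emph{odd}, the statement $\text{im}(\text{ad}\,E)\cap\Pi\g_F=0$ splits into two parity components after transporting everything to $\ev\g=\gl(N)$ via $\Pi$: one component is $\text{im}(\text{ad}\,e)\cap\ker(\text{ad}\,f)=0$, which classical $\mathfrak{sl}(2)$-theory for $\{e,h,f\}$ does give; but the other component is $\text{im}(\text{ad}^+e)\cap\ker(\text{ad}^+f)=0$, where $\text{ad}^+e(x)=ex+xe$ and $\text{ad}^+f(x)=fx+xf$ are \emph{anticommutator} operators arising from the supercommutator of two odd elements. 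The representation theory of $\{e,h,f\}$ acting by commutators says nothing directly about these. The paper closes this by observing that $\{\text{ad}^+e,\;\text{ad}\,h,\;\text{ad}^+f\}$ is itself an $\mathfrak{sl}(2)$-triple in $\End_{\mathbb C}(\gl(N))$, and then applying $\mathfrak{sl}(2)$-theory to \emph{that} triple. Without this (or an equivalent device), your directness step, and hence the decomposition $\m^\perp=[\m',E]\oplus\Pi\g_F$, does not follow. The final dimension count you sketch is essentially the paper's item (iv) and is fine once $\sudim[\m',E]=\sudim\Pi\m'$ (injectivity of $\text{ad}\,E$ on $\g_{\le -1}$) and the $F$-counterpart of (\ref{equ:gg6}) are invoked.
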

\begin{proof}
The lemma follows from the four facts below.
\begin{itemize}
\item[(i)] $\m^\perp \supseteq \Pi \g_F$. This follows form $\m^\perp \supseteq \g_{\leq
0}$ by (\ref{equ:gg3}) and the $F$-counterpart to (\ref{equ:gg2}')
which says that $\g_F \subseteq \g_{\leq 0}$ and thus $\Pi \g_F
\subseteq \g_{\leq 0}$ according to Lemma~\ref{lem:grading-inner}.
\item[(ii)] $\m^\perp \supseteq [\m',E]$. This follows from the
computation:\\ $(m,[m',E])$ $=$ $([m,m'],E)=0$ for $m \in \m$ and
$m'\in \m'$.
\item[(iii)] $[\m',E] \cap \Pi\g_F=0$. It suffices to show that
\begin{equation}\label{equ:imagepi}
\text{im}(\text{ad}\,E) \cap \Pi
\g_F=0.
\end{equation}
There are actually two identities, one in each
$\Z_2$-parity. The odd part of (\ref{equ:imagepi}) is equivalent to, as operators
on $\ev\g=\gl(N)$, $\text{im}(\text{ad}\,e) \cap \text{ker}(\ad\,f)
=0$, which is a result of $\mathfrak{sl}(2)$-representation theory.
If we define two operators on $\ev\g$ as follows
\begin{align*}
\text{ad}^+e: & \gl(N) \ra \gl(N),& x \mapsto ex+xe;\\
\text{ad}^+f: & \gl(N) \ra \gl(N), &x \mapsto fx+xf.
\end{align*}
Then we can check that $\{\text{ad}^+e, \text{ad}\,h, \text{ad}^+f
\}$ form an $\mathfrak{sl}(2)$-triple in $\text{End}_{\mathbb{C}}(\gl(n))$. Now
observe that the even part of (\ref{equ:imagepi}) is equivalent of saying
$\text{im}(\text{ad}^+e) \cap \text{ker}(\text{ad}^+ f)=0$, which is
now also a consequence of $\mathfrak{sl}(2)$-theory.
\item[(iv)] $\sudim \m^\perp = \sudim \Pi\m' +\sudim \g_0 + \sudim \g_{-1}=\sudim [\m', E]+\sudim
\Pi\g_F$. This follows from the (parity-shifting) bijection $\m' \ra
[\m', E]$, $x \mapsto [x,E]$, by (\ref{equ:gg4}) and the $F$-counter
part to (\ref{equ:gg6}).
\end{itemize}
\end{proof}

\subsection{$\mathbb{C}^*$-actions}
We describe some $\mathbb{C}^*$-actions on some affine superschemes; such
an action is equivalent to specifying a (right)
$\mathbb{C}[t,t^{-1}]$-comodule structure on the coordinate ring of
the affine superscheme.

The dual space $\g^*$ of $\g$ carries an induced grading
$\g^*=\oplus_j \g^*_j$ from $\text{ad}^*h$, where $h$ comes from the
$\Gamma$-graded $\mathfrak{sl}(2)$-triple. Define a
$\mathbb{C}^*$-action on $\g^*$ as follows.
\begin{align*}
\rho:\g^* & \ra \g^* \otimes \mathbb{C}[t,t^{-1}],\\
\xi &\mapsto \xi \otimes t^{2-j},
\end{align*}
where $\xi \in \g^*_j$. Note that this is equivalent to specifying a
comodule structure on $\mathbb{C}[\g^*]=S(\g)$ such that the comodule structure on $\mathbb{C}[\g^*]$ is a superalgebra homomorphism.

Let $\text{ad}^*: \g \ra \text{End}_{\mathbb{C}}(\g^*)$ denote the coadjoint action of $\g$ on $\g^*$.
The closed subsuperschemes $\chi+ (\text{ker}\,\text{ad}^*F)\circ \Pi$
and $\chi+ \m^{*,\perp}$ are stable under the action $\rho$ (meaning
$\mathbb{C}[\chi+ (\text{ker}\,\text{ad}^*F)\circ \Pi]$ and
$\mathbb{C}[\chi+ \m^{*,\perp}]$ are $\mathbb{C}[t,t^{-1}]$-subcomodules of
$\mathbb{C}[\g^*]$). Also the action on the underlying even variety
of $\chi+ \m^{*,\perp}$ (resp. $\chi+ (\text{ker}\,\text{ad}^*F)\circ
\Pi$) is contracting with the fixed point $\chi$.

The embedding of the $\Gamma$-graded $\mathfrak{sl}(2)$-triple in
$\g$ exponentiates to a rational homomorphism $\tilde{\gamma}:
\text{SL}_2\ra G_{\text{ev}} \ra G$. Define a $\mathbb{C}^*$-action
$\gamma$ on $G$ by conjugation by
$\tilde{\gamma}(\text{diag}(t^{-1},t))$, for $t \in \mathbb{C}^*$.

Now let $M'$ be the closed subgroup of $G$ whose Lie superalgebra is
$\m'$. The
$\mathbb{C}^*$-action on $M' \times (\chi+
(\text{ker}\,\text{ad}^*F)\circ \Pi)$ is defined to be $\gamma$ on
the first factor and $\rho$ on the second. Then this action on the
underlying even variety is also contracting with the fixed point
$(1, \chi)$.

\subsection{} Denote by $\kappa : \g \ra \g^*$ the isomorphism
induced by the non-degenerate bilinear form $(.,.)$. Following the
terminology of Gan and Ginzburg \cite{GG}, we will call
\[
\mathcal{S}:=\chi + (\text{ker} \,\text{ad}^*F) \circ \Pi \cong
\kappa(E + \Pi \g_F)
\]
the Slodowy slice (through $\chi$).

Using the isomorphism of vector superspaces $\kappa$,
Lemma~\ref{lem:m-perp} actually translates to the fact that the
differential map of the the coadjoint action map
\[
\alpha: M' \times (\chi+(\text{ker} \,\text{ad}^*F) \circ \Pi) \ra
\chi + \m^{*,\perp}
\]
is an isomorphism between the tangent spaces at the points
$(1,\chi)$ and $\chi$, i.e.
\[
\text{ad}^*\m'(\chi) \oplus (\text{ker}\,\text{ad}^*F)\circ
\Pi=\m^{*,\perp}.
\]

\begin{lemma}\label{lem:iso-var}
The coadjoint action map
\[
\alpha: M' \times (\chi+(\text{ker} \,\text{ad}^*F) \circ \Pi) \ra
\chi + \m^{*,\perp}
\]
is an isomorphism of affine superschemes.
\end{lemma}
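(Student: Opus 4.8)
The plan is to prove that $\alpha$ is an isomorphism of affine superschemes by combining the infinitesimal information already extracted in Lemma~\ref{lem:m-perp} with the $\mathbb{C}^*$-actions introduced above, following the strategy of \cite{GG}. First I would record that both sides of $\alpha$ are smooth affine superschemes whose underlying even schemes are affine spaces: the source $M' \times (\chi + (\text{ker}\,\text{ad}^*F)\circ\Pi)$ has underlying even scheme $M'_{\bar 0} \times (\text{even part of the slice})$, and the target $\chi + \m^{*,\perp}$ is an affine space. The map $\alpha$ is $\mathbb{C}^*$-equivariant for the actions $\gamma \times \rho$ on the source and $\rho$ on the target, and on the underlying even varieties both actions are contracting to the fixed points $(1,\chi)$ and $\chi$ respectively, with $\alpha(1,\chi) = \chi$.

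The key step is then the following standard lemma on contracting $\mathbb{C}^*$-actions, in its superscheme form: if $X$ and $Y$ are affine superschemes with $\mathbb{C}^*$-actions that are contracting on their underlying even varieties to fixed points $x_0 \in X$, $y_0 \in Y$, and if $\phi: X \to Y$ is a $\mathbb{C}^*$-equivariant morphism with $\phi(x_0)=y_0$ such that the differential $d\phi_{x_0}: T_{x_0}X \to T_{y_0}Y$ is an isomorphism of vector superspaces, then $\phi$ is an isomorphism. I would first invoke this for the underlying even varieties, using the classical statement (as in \cite{GG}); the hypothesis on $d\phi$ at the even level follows by taking even parts of the tangent-space identity $\text{ad}^*\m'(\chi) \oplus (\text{ker}\,\text{ad}^*F)\circ\Pi = \m^{*,\perp}$ established right before the statement. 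This gives that $\alpha$ is an isomorphism of the underlying ordinary schemes. To upgrade to superschemes, I would then argue that $\alpha^\#: \mathbb{C}[\chi+\m^{*,\perp}] \to \mathbb{C}[M' \times (\chi+(\text{ker}\,\text{ad}^*F)\circ\Pi)]$ is an isomorphism of superalgebras: the $\mathbb{C}^*$-grading (i.e. the $\mathbb{C}[t,t^{-1}]$-comodule structure) makes each coordinate ring a nonnegatively graded algebra, finitely generated over $\mathbb{C}$, with degree-zero piece $\mathbb{C}$; $\alpha^\#$ is a graded homomorphism, and the fact that $\alpha$ is an isomorphism modulo the odd nilpotents together with a graded Nakayama-type argument forces $\alpha^\#$ to be bijective. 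Concretely, one can argue degree by degree: since the gradings are bounded below and finite-dimensional in each degree, and $d\alpha$ is an isomorphism of (super)tangent spaces — equivalently $\alpha^\#$ induces an isomorphism on the cotangent spaces $\mathfrak{n}/\mathfrak{n}^2$ at the fixed points, where $\mathfrak{n}$ is the graded maximal ideal — surjectivity of $\alpha^\#$ follows from graded Nakayama, and injectivity follows by comparing graded dimensions in each degree (which agree because the underlying even map is already an isomorphism and the odd generators match up via $d\alpha$).

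I would carry out the steps in this order: (1) state the contracting-$\mathbb{C}^*$-action lemma and note its super-version; (2) verify $\alpha$ is $\mathbb{C}^*$-equivariant, fixes $(1,\chi)\mapsto\chi$, and that the actions are contracting on even parts — all of which were set up in the preceding subsections; (3) invoke the tangent-space isomorphism $\text{ad}^*\m'(\chi)\oplus(\text{ker}\,\text{ad}^*F)\circ\Pi = \m^{*,\perp}$, which is exactly the translation of Lemma~\ref{lem:m-perp} via $\kappa$ recorded above, and apply the even lemma to get an isomorphism of underlying schemes; (4) promote to superschemes via the graded/Nakayama argument on coordinate rings. The main obstacle I anticipate is step (4): making the passage from ordinary schemes to superschemes fully rigorous, i.e. ensuring that the odd coordinates are handled correctly by the graded argument and that no subtlety arises from the non-reducedness in the odd directions. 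One must be careful that the $\mathbb{C}^*$-weights on both sides are genuinely nonnegative (which is why the specific normalization $\xi \mapsto \xi \otimes t^{2-j}$ and the contracting property were emphasized) and that each graded piece is finite-dimensional, so that the graded Nakayama lemma applies; once that is in place the dimension count forces the isomorphism. Everything else reduces to assembling facts already proved in the excerpt.
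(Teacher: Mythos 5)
Your proposal is correct and follows essentially the same route as the paper, which (citing \cite[(7.7)]{Gin}) uses the $\mathbb{C}^*$-equivariance of $\alpha$, the tangent-space isomorphism at the fixed points coming from Lemma~\ref{lem:m-perp}, and a comparison of the graded coordinate rings via the adic filtration at the fixed point to get injectivity of $\alpha^*$ from the associated graded and surjectivity from equality of formal characters. Your version merely reorganizes this (even underlying varieties first, then a graded Nakayama/Hilbert-series argument for the super structure); the content is the same.
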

\begin{proof}
We sketch a proof here following \cite[(7.7)]{Gin}.

First of all, note that $\alpha$ is $\mathbb{C}^*$-equivariant.

Write $X_1$ for $M' \times (\chi+(\text{ker} \,\text{ad}^*F) \circ
\Pi)$ and $X_2$ for $\chi + \m^{*,\perp}$. Denote $\chi_X \in
\mathbb{C}[[t]]$ the formal character of the coordinate ring of a
$\mathbb{C}^*$-superscheme $X$.

Let $x_i$ denote the unique $\mathbb{C}^*$-fixed point in the
underlying even variety of $X_i$, and let $T_i=T_{x_i}X_i$ be the
tangent space of $X_i$ at $x_i$. By Lemma~\ref{lem:m-perp} and the
equivariance of $\alpha$, we have $\chi_{T_1}=\chi_{T_2}$.

Introduce an adic filtration of $\mathbb{C}[X_i]$ by powers of the
maximal ideal of the point $x_i$. Then by definition, we have
$\mathbb{C}[T_i] \cong \text{gr}\,\mathbb{C}[X_i]$ as superalgebras.
Since $x_i$ is a fixed point under the $\mathbb{C}^*$-action, the
isomorphism is actually an isomorphism of
$\mathbb{C}[t,t^{-1}]$-comodules. It follows that
$\chi_{X_1}=\chi_{T_1}=\chi_{T_2}=\chi_{X_2}$. As a result,
$\text{gr}\,\mathbb{C}[X_2] \cong \text{gr}\,\mathbb{C}[X_1]$. It
follows from the isomorphism of the graded version that $\alpha^*:
\mathbb{C}[X_2] \ra \mathbb{C}[X_1]$ is injective. Finally,
$\alpha^*$ has to be surjective also since $\chi_{X_1}=\chi_{X_2}$.
\end{proof}

\subsection{The Kazhdan grading and filtration}
Let $\{U_j(\g)\}$ be the standard PBW filtration on $U(\g)$. The
action of $\text{ad}\,h$ induces a grading on each $U_j(\g)$ by
\[
U_j(\g)_i=\{x\in U_j(\g) \vert\; \text{ad}\,h(x)=ix\}.
\]
The Kazhdan filtration on $U(\g)$ is defined by letting the
\[
F_l^KU(\g)=\sum_{i+2k\leq l} U_k(\g)_i.
\]
The associated grading on $\text{gr}^KU(\g)$ will be called Kazhdan
grading. We also define the Kazhdan gradings for $\g$ and $S(\g)$ in
a similar fashion. Similar to the Lie algebra case, the Kazhdan
filtration has the following properties.
\begin{itemize}
\item[(1)] The canonical map $\text{gr}^KU(\g) \ra
S(\g)=\mathbb{C}[\g^*]$ is an isomorphism of graded commutative
superalgebras.

\item[(2)] There is a Kazhdan filtration $\{F_k^KQ_{\mathfrak{l}}\}$
on $Q_{\mathfrak{l}}=U(\g)/I_{\mathfrak{l}}$ induced from $U(\g)$.
And the filtration satisfies $F_k^KQ_{\mathfrak{l}}=0$ unless $k \geq
0$.

\item[(3)]
$\text{gr}^KQ_{\mathfrak{l}}=\text{gr}^KU(\g)/\text{gr}^KI_{\mathfrak{l}}$
is a commutative $\Z_+$-graded superalgebra.

\item[(4)] The ideal $\text{gr}^K I_{\mathfrak{l}}$ in $\text{gr}^KU(\g)=\mathbb{C}[\g^*]$ can be identified with the ideal of polynomial functions on $\g^*$ which vanish on $\chi+\m^{*,\perp}$. The canonical map $\text{gr}^K Q_{\mathfrak{l}} \ra \mathbb{C}[\chi+\m^{*,\perp}]$ is an algebra isomorphism.

\item[(5)] There is an induced Kazhdan filtration $\{F^K \mathcal{W}_{\mathfrak{l}}\}$ on the subspace $\mathcal{W}_{\mathfrak{l}}$ of $Q_{\mathfrak{l}}$ such that $F_j^K\mathcal{W}_{\mathfrak{l}}=0$ unless $j\geq 0$.
\end{itemize}

Thus we have the following diagram
\begin{equation}\label{equ:diagram}
\xymatrix{
  \text{gr}^KU(\g) \ar[d]_{} \ar@{=}[r]^{} & \mathbb{C}[\g^*] \ar[d]^{} \\
  \text{gr}^KQ_{\mathfrak{l}}  \ar@{=}[r]^{} & \mathbb{C}[\chi+\m^{*,\perp}] \ar[d]^{} \\
  \text{gr}^K\mathcal{W}_{\mathfrak{l}}\ar[u]_{} \ar[r]^{\nu} & \mathbb{C}[\mathcal{S}]   }
\end{equation}
where
$\nu:\text{gr}^K\mathcal{W}_{\mathfrak{l}} \ra
\mathbb{C}[\mathcal{S}]$ is the composition of the three natural
maps
\[
\text{gr}^K\mathcal{W}_{\mathfrak{l}} \ra \text{gr}^KQ_{\mathfrak{l}}
\ra \mathbb{C}[\chi+\m^{*,\perp}] \ra \mathbb{C}[\mathcal{S}].
\]

\subsection{Definition of $\mathcal{W}_{\mathfrak{l}}$: Independence of $\mathfrak{l}$ and the grading}

\begin{lemma}\label{lem:A-dR-super}
We have
\[
H^i(\mathfrak{m}',\mathbb{C}[M'])=\delta_{i,0}\mathbb{C},
\]
where $H^i(\mathfrak{m}',\mathbb{C}[M'])$ denotes the $i$th Lie
superalgebra cohomology of $\mathfrak{m}'$ with coefficient in
$\mathbb{C}[M']$.
\end{lemma}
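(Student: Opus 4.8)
The plan is to identify the Chevalley--Eilenberg complex computing $H^\bullet(\m',\mathbb{C}[M'])$ with the algebraic de Rham complex of the supergroup $M'$, and then to compute the latter using that $M'$ is, as a superscheme, an affine superspace. Indeed, $\m'=\mathfrak{l}'\oplus\bigoplus_{i\le -2}\g_i$ lies in strictly negative $\Gamma$-degrees and, as already noted, is a nilpotent Lie superalgebra; hence $M'$ is a connected unipotent supergroup. Ordering a homogeneous basis of $\m'$ compatibly with $\Gamma$ exhibits $M'$ as an iterated semidirect product of copies of the even and the odd one-dimensional additive supergroups, and, nilpotency forcing the Baker--Campbell--Hausdorff series to terminate, the exponential map gives an isomorphism of superschemes $M'\cong\m'$, where $\m'$ is regarded as the affine superspace $\mathbb{A}^{p|q}$, $p|q=\sudim\m'$. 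In particular $\mathbb{C}[M']\cong\mathbb{C}[x_1,\dots,x_p]\otimes\Lambda[\xi_1,\dots,\xi_q]$, the $x_i$ even and the $\xi_j$ odd.

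Let $\m'$ act on $\mathbb{C}[M']$ by left-invariant derivations, i.e.\ by the infinitesimal right regular action. A global coframe of left-invariant $1$-forms identifies the Chevalley--Eilenberg cochain complex $C^\bullet(\m',\mathbb{C}[M'])=\mathbb{C}[M']\otimes\Lambda^\bullet((\m')^*)$ — where $\Lambda^\bullet$ denotes the \emph{super} exterior algebra, genuinely exterior on $\ev{(\m')^*}$ and a polynomial algebra on $\od{(\m')^*}$ — together with its differential, with the algebraic de Rham complex $(\Omega^\bullet(M'),d)$ of $M'$; this is the super analogue of the classical fact for Lie groups, and left-invariance of the coframe is exactly what matches the de Rham differential with the Chevalley--Eilenberg one. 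Hence $H^i(\m',\mathbb{C}[M'])\cong H^i_{\mathrm{dR}}(\mathbb{A}^{p|q})$.

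It remains to show that $H^\bullet_{\mathrm{dR}}(\mathbb{A}^{p|q})=\mathbb{C}$, concentrated in degree $0$. By the K\"unneth formula $\Omega^\bullet(\mathbb{A}^{p|q})\cong\Omega^\bullet(\mathbb{A}^{p|0})\otimes\Omega^\bullet(\mathbb{A}^{0|q})$, and the first factor has cohomology $\mathbb{C}$ in degree $0$ by the algebraic Poincar\'e lemma. For the second factor, since the $1$-forms $d\xi_j$ are \emph{even}, one has $\Omega^\bullet(\mathbb{A}^{0|q})=\Lambda[\xi_1,\dots,\xi_q]\otimes\mathbb{C}[d\xi_1,\dots,d\xi_q]$ with differential determined by $\xi_j\mapsto d\xi_j$ and $d\xi_j\mapsto 0$; this is precisely the super Koszul complex attached to the regular sequence $(d\xi_1,\dots,d\xi_q)$ in $\mathbb{C}[d\xi_1,\dots,d\xi_q]$, whose acyclicity — a super Poincar\'e lemma, proved for instance by the homotopy given by the inverse, on the augmentation ideal, of the Euler derivation $d\iota+\iota d$ with $\iota$ the contraction $d\xi_j\mapsto\xi_j$ — yields cohomology $\mathbb{C}$ in degree $0$. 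Applying K\"unneth once more gives $H^i_{\mathrm{dR}}(\mathbb{A}^{p|q})=\delta_{i,0}\mathbb{C}$, as asserted.

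The only steps that are not routine are the identification in the second paragraph and the claim $M'\cong\mathbb{A}^{p|q}$: one must handle the odd directions carefully (via the functor of points for $\exp$) and pin down the signs in the super de Rham complex so that its identification with $C^\bullet(\m',\mathbb{C}[M'])$ is on the nose. Neither is deep, but this is where the super bookkeeping lives.
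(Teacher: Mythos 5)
Your proof is correct and follows essentially the same route as the paper's: identify $\mathbb{C}[M']$ with a free polynomial superalgebra, recognize the Chevalley--Eilenberg complex as the super de Rham (Koszul) complex of that superalgebra, and conclude by its acyclicity. The only variation is cosmetic: you realize $M'\cong\mathbb{A}^{p|q}$ via the exponential map of the unipotent supergroup, whereas the paper obtains the same identification from the contracting $\mathbb{C}^*$-action argument of Lemma~\ref{lem:iso-var} applied to the orbit map $M'\ra\chi+\mathrm{ad}^*\m'(\chi)$, and the paper cites Kostant for the super Poincar\'e lemma where you write out the Euler-operator homotopy.
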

\begin{proof}
In the same way of proving Lemma~\ref{lem:iso-var}, we can show that
$M'$ is isomorphic to the affine superspace
$\chi+\text{ad}^*\m'(\chi)$. Then the coordinate superalgebra
$\mathbb{C}[M']$ is isomorphic to a polynomial superalgebra
$\mathbb{C}[x_1, \ldots, x_s;\xi_1,\ldots,\xi_s]$. Note that, the
Lie superalgebra cohomology $H^i(\mathfrak{m}',\mathbb{C}[M'])$, $i
\geq 0$, can be computed from the cohomology of the cochain complex
\begin{equation}\label{equ:LSU-cohom}
\text{Hom}(\wedge^\bullet \text{Der}_{\mathbb{C}}(\mathbb{C}[M']), \mathbb{C}[M']),
\end{equation}
where $\text{Der}_{\mathbb{C}}(\mathbb{C}[M'])$ is the algebra of
derivations of $\mathbb{C}[M']$. After identifying $\mathbb{C}[M']$
with the polynomial superalgebra $\mathbb{C}[x_1, \ldots,
x_s;\xi_1,\ldots,\xi_s]$, the cochain complex (\ref{equ:LSU-cohom})
becomes
\begin{equation}\label{equ:Koszul_complex}
\text{Hom}(\wedge^\bullet \text{Der}_{\mathbb{C}}(\mathbb{C}[x_1,
\ldots, x_s;\xi_1,\ldots,\xi_s]), \mathbb{C}[x_1, \ldots,
x_s;\xi_1,\ldots,\xi_s]).
\end{equation}
The cohomology of (\ref{equ:Koszul_complex}), which can be computed
in the same way as in [Kos, proof of Theorem~4.6], is exactly as
desired.
\end{proof}

Recall that $\m'$ is graded with respect to the grading $\Gamma$. We
view $U(\g)$ and $Q_{\mathfrak{l}}$ as $\m'$-modules via the adjoint
$\m'$-action. Then $U(\g)$ and $Q_{\mathfrak{l}}$ are Kazhdan
filtered $\m'$-modules and the canonical map $p: U(\g)\ra
Q_{\mathfrak{l}}$ is $\m'$-equivariant. Thus, $\text{gr}^KU(\g)$ and
$\text{gr}^KQ_{\mathfrak{l}}$ are Kazhdan graded $\m'$-modules, and
the map $\text{gr}^Kp: \text{gr}^KU(\g)\ra \text{gr}^KQ_{\mathfrak{l}}$ is
also $\m'$-equivariant.

By definition
$\mathcal{W}_{\mathfrak{l}}=H^0(\m',Q_{\mathfrak{l}})$, the $0$th
Lie superalgebra cohomology of $\m'$ with coefficient in
$Q_{\mathfrak{l}}$.

\begin{theorem}\label{thm:isom-nu}
The map $\nu: \text{gr}^K\mathcal{W}_{\mathfrak{l}} \ra
\mathbb{C}[\mathcal{S}]$ is an isomorphism of graded superalgebras; it
is equal to the composite $\nu_2\nu_1$:
\[
\text{gr}^KH^0(\m', Q_{\mathfrak{l}}) \xrightarrow{\nu_1} H^0(\m',
\text{gr}^KQ_{\mathfrak{l}}) \xrightarrow{\nu_2} \mathbb{C}[\mathcal{S}]
\]
where both $\nu_1$ and $\nu_2$ are isomorphisms. Moreover,
\[
H^i(\m',Q_{\mathfrak{l}})=H^i(\m',\text{gr}^KQ_{\mathfrak{l}})=0,
\]
for all $i>0$.
\end{theorem}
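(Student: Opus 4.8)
The plan is to follow the approach of Gan--Ginzburg \cite{GG}. Since $\mathcal{W}_{\mathfrak{l}}=H^0(\m',Q_{\mathfrak{l}})$, all three assertions are statements about the Lie superalgebra cohomology $H^\bullet(\m',Q_{\mathfrak{l}})$, computed by the (super) Chevalley--Eilenberg cochain complex $\Hom(\wedge^\bullet\m',Q_{\mathfrak{l}})$ with $\m'$ acting via $\ad\m'$. I would first compute $H^\bullet(\m',\text{gr}^KQ_{\mathfrak{l}})$ on the nose, showing it is $\C[\mathcal{S}]$ in cohomological degree $0$ and vanishes in higher degrees; this produces $\nu_2$. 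Then I would transfer the computation to $Q_{\mathfrak{l}}$ itself by the standard spectral-sequence comparison with the associated graded module, which gives $\nu_1$ and the vanishing $H^{>0}(\m',Q_{\mathfrak{l}})=0$. The equality $\nu=\nu_2\nu_1$ and the fact that $\nu$ is a homomorphism of graded (commutative) superalgebras are built into the construction: $\nu_1$ is the symbol map, $\nu_2$ is induced by restriction of functions along $\mathcal{S}\hookrightarrow\chi+\m^{*,\perp}$, and the composite is the one in diagram (\ref{equ:diagram}).

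For the first step: by property~(4) of the Kazhdan filtration, $\text{gr}^KQ_{\mathfrak{l}}\cong\C[\chi+\m^{*,\perp}]$ as Kazhdan-graded superalgebras, and this identification is $\m'$-equivariant for the coadjoint action, which does preserve $\chi+\m^{*,\perp}$: indeed $\m$ is an ideal of $\m'$ (a quick grading check) and $\chi|_{\m}$ is $\ad\m'$-invariant, since $\chi([\m',\m])=0$ --- the only part of $[\m',\m]$ not contained in $\g_{\leq-3}$, namely $[\mathfrak{l}',\mathfrak{l}]\subseteq\g_{-2}$, pairs trivially with $E$ because $\langle\mathfrak{l}',\mathfrak{l}\rangle=0$. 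Now Lemma~\ref{lem:iso-var} supplies an isomorphism $\alpha\colon M'\times\mathcal{S}\xrightarrow{\ \sim\ }\chi+\m^{*,\perp}$ that is $\mathbb{C}^*$-equivariant and $M'$-equivariant for left translation on the first factor; hence $\text{gr}^KQ_{\mathfrak{l}}\cong\C[M']\otimes\C[\mathcal{S}]$ as Kazhdan-graded $\m'$-modules with $\m'$ acting through the first tensor factor only. Since $\wedge^\bullet\m'$ is finite-dimensional, $H^i(\m',\text{gr}^KQ_{\mathfrak{l}})\cong H^i(\m',\C[M'])\otimes\C[\mathcal{S}]$, which by Lemma~\ref{lem:A-dR-super} equals $\delta_{i,0}\,\C[\mathcal{S}]$; in degree $0$ this is $\nu_2$.

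For the second step, the comparison runs as in \cite[Section~4]{GG}: filter the Chevalley--Eilenberg complex $\Hom(\wedge^\bullet\m',Q_{\mathfrak{l}})$ by the Kazhdan filtration on $Q_{\mathfrak{l}}$ (the differential is filtered because $\ad\m'$ preserves this filtration, part of the compatibility underlying properties (2)--(5)), so that the associated spectral sequence has $E_1$-term $H^\bullet(\m',\text{gr}^KQ_{\mathfrak{l}})$. As the filtration on $Q_{\mathfrak{l}}$ is exhausting and satisfies $F^K_kQ_{\mathfrak{l}}=0$ for $k<0$, the spectral sequence converges; by the first step $E_1$ is concentrated in cohomological degree $0$, so $E_\infty=E_1$, which yields $H^i(\m',Q_{\mathfrak{l}})=0$ for $i>0$ together with an isomorphism of $\text{gr}^K\mathcal{W}_{\mathfrak{l}}=\text{gr}^KH^0(\m',Q_{\mathfrak{l}})$ --- the induced filtration on $H^0$ being the Kazhdan one --- onto $H^0(\m',\text{gr}^KQ_{\mathfrak{l}})\cong\C[\mathcal{S}]$; this isomorphism is $\nu_1$. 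Composing with $\nu_2$ completes the proof.

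The real input is Lemma~\ref{lem:iso-var} (the coadjoint action map is an isomorphism) and Lemma~\ref{lem:A-dR-super} (super-Koszul acyclicity), both already available, so what remains is bookkeeping; the step I expect to require the most care is the chain of identifications $\text{gr}^KQ_{\mathfrak{l}}\cong\C[\chi+\m^{*,\perp}]\cong\C[M']\otimes\C[\mathcal{S}]$ in the first step, where one must verify that the coadjoint $\m'$-action genuinely stabilizes $\chi+\m^{*,\perp}$, that the isomorphism of Lemma~\ref{lem:iso-var} intertwines left translation on $M'$ with the coadjoint action, and that every identification respects the Kazhdan grading. The super sign conventions in the Chevalley--Eilenberg complex and the convergence of the spectral sequence in the second step are then routine.
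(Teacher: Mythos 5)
Your overall strategy coincides with the paper's: first compute $H^\bullet(\m',\text{gr}^KQ_{\mathfrak{l}})$ via the chain of identifications $\text{gr}^KQ_{\mathfrak{l}}\cong\C[\chi+\m^{*,\perp}]\cong\C[M']\otimes\C[\mathcal{S}]$ together with Lemma~\ref{lem:A-dR-super}, then transfer the answer to $Q_{\mathfrak{l}}$ by a spectral sequence. The verifications you single out in the first step (that $\ad^*\m'$ stabilizes $\chi+\m^{*,\perp}$ because $\chi([\m',\m])=0$, and that $\alpha$ intertwines the actions) are correct and are indeed left implicit in the paper.

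There is, however, one substantive slip in your second step, namely the choice of filtration on the cochain complex. If you filter $\Hom(\wedge^\bullet\m',Q_{\mathfrak{l}})$ only by the Kazhdan filtration on the coefficients, i.e.\ $F_p=\Hom(\wedge^k\m',F^K_pQ_{\mathfrak{l}})$, the $E_1$ page is \emph{not} $H^\bullet(\m',\text{gr}^KQ_{\mathfrak{l}})$. Indeed, for $x\in\m'\cap\g_{-i}$ (so $i\geq 1$) the operator $\ad x$ maps $F^K_lQ_{\mathfrak{l}}$ into $F^K_{l-i}Q_{\mathfrak{l}}$, because taking a commutator drops the PBW degree by one; hence $\ad x$ \emph{strictly} lowers your filtration. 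The coefficient part of the Chevalley--Eilenberg differential therefore vanishes on the associated graded, the induced $\m'$-action on $E_0$ is zero, and $E_1$ is $H^\bullet(\m',\C)\otimes\text{gr}^KQ_{\mathfrak{l}}$, which is not concentrated in cohomological degree $0$ (already $H^1(\m',\C)=(\m'/[\m',\m'])^*\neq 0$), so your degeneration argument fails at the first page. The repair is exactly the filtration used in the paper: since ${\m'}^*=\oplus_{i\geq 1}{\m'}^*_i$ is positively graded, let $F_p(\wedge^k{\m'}^*\otimes Q_{\mathfrak{l}})$ be spanned by $(x_1\wedge\cdots\wedge x_k)\otimes v$ with $x_r\in{\m'}^*_{i_r}$, $v\in F^K_jQ_{\mathfrak{l}}$ and $i_1+\cdots+i_k+j\leq p$. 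With this total filtration the drop by $i$ in Kazhdan degree caused by $\ad x$ is compensated by wedging with an element of ${\m'}^*_i$, the associated graded complex is the full standard complex of $\text{gr}^KQ_{\mathfrak{l}}$, and the remainder of your argument (convergence from the filtration being exhausting and bounded below, degeneration at $E_1$, and the identification of $\nu_1$, $\nu_2$ and $\nu=\nu_2\nu_1$ with the maps in diagram (\ref{equ:diagram})) goes through as written.
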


\begin{proof}
The proof is a straightforward generalization of
\cite[Section~5]{GG}. We make a sketch here.

By Lemma~\ref{lem:iso-var}, we have isomorphisms of vector spaces
\[
\text{gr}^KQ_{\mathfrak{l}} \cong \mathbb{C}[\chi+\m^{*,\perp}]\cong
\mathbb{C}[M']\otimes \mathbb{C}[\mathcal{S}].
\]
These isomorphisms are actually on the level of $\m'$-modules, where
the $\m'$-module structure on the third space comes from the
$\m'$-adjoint action on its first tensor factor $\mathbb{C}[M']$.
The statements involving $\text{gr}^KQ_{\mathfrak{l}}$ in the
theorem follows from Lemma~\ref{lem:A-dR-super}.

Note that the Kazhdan filtration on $Q_{\mathfrak{l}}$ has no
negative-degree component. Also, $\m'$ is a negatively graded
subalgebra of $\g$ with respective to $\Gamma$, so its dual
${\m'}^*$ is positively graded. Write this graded decomposition as
${\m'}^*=\oplus_{i\geq 1}{\m'}^*_{i}$.

Consider the standard cochain complex for computing the Lie
supalgebra $\m'$-cohomology of $Q_{\mathfrak{l}}$:
\begin{equation}\label{equ:coh}
0\ra Q_{\mathfrak{l}} \ra {\m'}^* \otimes Q_{\mathfrak{l}} \ra
\cdots \ra \wedge^k {\m'}^*\otimes Q_{\mathfrak{l}} \ra \cdots
\end{equation}
A filtration on $\wedge^k{\m'}^*\otimes Q_{\mathfrak{l}}$ is defined
by letting $F_p(\wedge^k{\m'}^*\otimes Q_{\mathfrak{l}})$ be the
subspace of $\wedge^k{\m'}^*\otimes Q_{\mathfrak{l}}$ spanned by
$(x_1\wedge \ldots \wedge x_k)\otimes v$, for all
$x_1\in{\m'}^*_{i_1}$,\ldots, $x_k \in {\m'}^*_{i_k}$ and $v \in
F_j^KQ_{\mathfrak{l}}$ such that $i_1 +\cdots +i_k+j \leq p$. This
defines a filtered complex structure on (\ref{equ:coh}). Taking the
associated graded complex of (\ref{equ:coh}) gives the standard
cochain complex for computing the $\m'$-cohomology of
$\text{gr}^KQ_{\mathfrak{l}}$.

Now consider the spectral sequence with
\[
E^{p,q}_0=F_p(\wedge^{p+q}{\m'}^*\otimes
Q_{\mathfrak{l}})/F_{p-1}(\wedge^{p+q}{\m'}^*\otimes
Q_{\mathfrak{l}}).
\]
Then $E^{p,q}_1=H^{p+q}(\m', \text{gr}^K_pQ_{\mathfrak{l}})$, and
the spectral sequence converges to $E^{p,q}_{\infty}$ $=$
$F_pH^{p+q}(\m',Q_{\mathfrak{l}})/F_{p-1}H^{p+q}(\m',Q_{\mathfrak{l}})$.
The rest of the theorem follows from this and the parts about
$\text{gr}^KQ_{\mathfrak{l}}$ established above.
\end{proof}

\begin{theorem}\label{thm:isotropic}
The superalgebras $\mathcal{W}_{\mathfrak{l}}$ are all isomorphic
for different choices of isotropic subspaces $\mathfrak{l} \subset
\g_{-1}$.
\end{theorem}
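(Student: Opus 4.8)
The plan is to show that for two isotropic subspaces, the corresponding $W$-superalgebras are isomorphic by first reducing to a comparison between a given isotropic subspace $\mathfrak{l}$ and a fixed Lagrangian (maximal isotropic) subspace $\mathfrak{l}_{\max}$ containing it, and then exhibiting an explicit isomorphism in that nested situation. The key structural input is Theorem~\ref{thm:isom-nu}: for any isotropic $\mathfrak{l}$, the Kazhdan-graded superalgebra $\mathrm{gr}^K\mathcal{W}_{\mathfrak{l}}$ is canonically isomorphic (via $\nu$) to $\mathbb{C}[\mathcal{S}]$, and $\mathcal{S}$ does \emph{not} depend on $\mathfrak{l}$ at all. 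So all the $\mathcal{W}_{\mathfrak{l}}$ have the same associated graded superalgebra; the task is to upgrade this to an isomorphism of the filtered objects themselves, compatibly.

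\smallskip

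First I would handle the nested case $\mathfrak{l} \subseteq \mathfrak{l}_{\max}$ (with $\mathfrak{l}_{\max}$ Lagrangian, so $\mathfrak{l}_{\max} = \mathfrak{l}_{\max}'$). Here $\m \subseteq \m_{\max}$ and $\m_{\max} \subseteq \m' $ as well, since $\mathfrak{l}_{\max} \subseteq \mathfrak{l}'$. There is a natural surjection $Q_{\mathfrak{l}} = U(\g)\otimes_{U(\m)}\mathbb{C}_\chi \twoheadrightarrow Q_{\mathfrak{l}_{\max}} = U(\g)\otimes_{U(\m_{\max})}\mathbb{C}_\chi$ of $\g$-modules (hence of $\mathrm{ad}\,\m_{\max}$-modules), which is Kazhdan-filtered. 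Following the Gan--Ginzburg argument (\cite[Section~5]{GG}), I would argue that this map restricts to a map $\mathcal{W}_{\mathfrak{l}} = H^0(\m',Q_{\mathfrak{l}}) \to H^0(\m_{\max},Q_{\mathfrak{l}_{\max}}) = \mathcal{W}_{\mathfrak{l}_{\max}}$; more precisely, since $\m_{\max} \subseteq \m'$, an $\mathrm{ad}\,\m'$-invariant of $Q_{\mathfrak{l}}$ is in particular $\mathrm{ad}\,\m_{\max}$-invariant, and its image in $Q_{\mathfrak{l}_{\max}}$ is $\mathrm{ad}\,\m_{\max}$-invariant. This is an algebra homomorphism (multiplication is induced from $U(\g)$ on both sides). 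To see it is an isomorphism, I would pass to associated graded objects: the induced map $\mathrm{gr}^K\mathcal{W}_{\mathfrak{l}} \to \mathrm{gr}^K\mathcal{W}_{\mathfrak{l}_{\max}}$ fits, via the $\nu$'s of Theorem~\ref{thm:isom-nu}, into a triangle with $\mathbb{C}[\mathcal{S}]$, and one checks it is exactly the identity on $\mathbb{C}[\mathcal{S}]$ (because both $\nu$'s are defined through the same restriction-to-$\mathcal{S}$ procedure and the map $Q_{\mathfrak{l}}\to Q_{\mathfrak{l}_{\max}}$ induces on $\mathrm{gr}^K$ the obvious quotient $\mathbb{C}[\chi+\m^{*,\perp}]\to\mathbb{C}[\chi+\m_{\max}^{*,\perp}]$, which restricts to the identity on the common slice $\mathcal{S}$). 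An isomorphism on $\mathrm{gr}^K$ between complete/exhaustively filtered algebras with filtration bounded below (property (5) in the Kazhdan filtration list: $F^K_j = 0$ for $j<0$) is an isomorphism, so $\mathcal{W}_{\mathfrak{l}} \cong \mathcal{W}_{\mathfrak{l}_{\max}}$.

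\smallskip

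Then for two arbitrary isotropic subspaces $\mathfrak{l}_1, \mathfrak{l}_2$, I would not try to compare them directly. Instead, extend $\mathfrak{l}_1$ to a Lagrangian $\mathfrak{l}_{\max}^{(1)}$ and $\mathfrak{l}_2$ to a Lagrangian $\mathfrak{l}_{\max}^{(2)}$, giving $\mathcal{W}_{\mathfrak{l}_1} \cong \mathcal{W}_{\mathfrak{l}_{\max}^{(1)}}$ and $\mathcal{W}_{\mathfrak{l}_2} \cong \mathcal{W}_{\mathfrak{l}_{\max}^{(2)}}$ by the nested case. So it remains to identify $\mathcal{W}_{\mathfrak{l}}$ for two Lagrangians $\mathfrak{l}, \tilde{\mathfrak{l}}$ of the even nondegenerate skew-supersymmetric form $\langle\cdot,\cdot\rangle$ on $\g_{-1}$. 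Since such a form splits as a symplectic form on $\g_{-1,\bar0}$ and a symmetric form on $\g_{-1,\bar1}$, any two Lagrangians are conjugate under the relevant (super)isometry group; but a cleaner route, again following \cite{GG}, is to connect any two Lagrangians through a chain in which consecutive members have an isotropic subspace contained in both. Concretely, given Lagrangians $\mathfrak{l},\tilde{\mathfrak{l}}$, I would argue that one can find a Lagrangian $\mathfrak{l}''$ together with isotropic subspaces $\mathfrak{l}\cap\mathfrak{l}''$ and $\mathfrak{l}''\cap\tilde{\mathfrak{l}}$ of sufficiently controlled dimension, reducing everything back to the nested case handled above (each containment $\mathfrak{l}\cap\mathfrak{l}'' \subseteq \mathfrak{l}$ and $\mathfrak{l}\cap\mathfrak{l}'' \subseteq \mathfrak{l}''$ gives an isomorphism). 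Iterating finitely many times connects $\mathfrak{l}$ to $\tilde{\mathfrak{l}}$. Assembling the chain of isomorphisms gives $\mathcal{W}_{\mathfrak{l}_1}\cong\mathcal{W}_{\mathfrak{l}_2}$.

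\smallskip

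\textbf{The main obstacle} I anticipate is the verification that the natural map $\mathcal{W}_{\mathfrak{l}}\to\mathcal{W}_{\mathfrak{l}_{\max}}$ induces the \emph{identity} (rather than merely \emph{some} isomorphism) on $\mathrm{gr}^K \cong \mathbb{C}[\mathcal{S}]$ — this is what makes the filtered isomorphism argument go through, and it requires carefully tracing through the definitions of $\nu_1,\nu_2$ in Theorem~\ref{thm:isom-nu} and checking compatibility of the spectral-sequence identifications for $\mathfrak{l}$ and for $\mathfrak{l}_{\max}$ with the quotient map $Q_{\mathfrak{l}}\to Q_{\mathfrak{l}_{\max}}$. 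A secondary technical point, needing the super analogue of the corresponding step in \cite{GG}, is ensuring that restriction of $\mathrm{ad}\,\m'$-invariants to $\mathrm{ad}\,\m_{\max}$-invariants lands in (and surjects onto, after gr) the correct cohomological object; here the vanishing $H^{>0}(\m',Q_{\mathfrak{l}})=0$ and $H^{>0}(\m_{\max},Q_{\mathfrak{l}_{\max}})=0$ from Theorem~\ref{thm:isom-nu}, applied also with $\m'$ replaced by intermediate subalgebras, should absorb all the homological bookkeeping just as in the Lie algebra case.
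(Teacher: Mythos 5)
Your proposal is correct and follows essentially the same route as the paper, which simply invokes the Gan--Ginzburg argument of \cite[Section~5.5]{GG}: compare nested isotropic subspaces via the filtered surjection $Q_{\mathfrak{l}} \twoheadrightarrow Q_{\tilde{\mathfrak{l}}}$ and use Theorem~\ref{thm:isom-nu} to see that the induced map on $\mathrm{gr}^K \cong \mathbb{C}[\mathcal{S}]$ is the identity, hence that the map is a filtered isomorphism. The only inefficiency is your final chain-of-Lagrangians step: since the nested-case argument uses nothing about the larger subspace being Lagrangian and $\{0\}$ is isotropic and contained in every $\mathfrak{l}$, one can compare each $\mathcal{W}_{\mathfrak{l}}$ directly with $\mathcal{W}_{0}$ and skip the chain entirely.
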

\begin{proof}
The proof is the same as the Lie algebra case in \cite[Section~5.5]{GG}.
\end{proof}

From the classification of good gradings of $\g=\mathfrak{q}(N)$ in
Lemma~\ref{lem:grading-good-gl-q}, we also have the following.
\begin{theorem}
The $W$-superalgebra associated to any two good gradings $\Gamma$
and $\Gamma'$ for $\chi$ are isomorphic.
\end{theorem}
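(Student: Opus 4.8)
The strategy is to reduce the independence of good gradings to the case of adjacent gradings and then invoke the Lie algebra result of Brundan--Goodwin \cite{BG} together with our classification in Lemma~\ref{lem:grading-good-gl-q}. First I would recall that by Lemma~\ref{lem:grading-good-gl-q} a good grading $\Gamma$ of $\g$ for $\chi$ is completely determined by its restriction to $\ev\g=\gl(N)$, which is a good grading for $e$ in the usual Lie-algebra sense; conversely every good grading of $\gl(N)$ for $e$ lifts (uniquely, by the recipe in Section~\ref{sec:gg}) to a good grading of $\g$ for $\chi$. Thus the set of good gradings of $\g$ for $\chi$ is in canonical bijection with the set of good gradings of $\gl(N)$ for $e$, and by \cite[Theorem]{BG} (or \cite{EK}) any two of the latter are connected by a chain of ``elementary'' moves, each of which either coincides on the relevant pieces or differs in a controlled way captured by a pyramid shift.

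The key step is then to show that a single elementary move between good gradings $\Gamma$ and $\Gamma'$ yields isomorphic $W$-superalgebras. Here I would use the fact, established in Theorem~\ref{thm:isom-nu} and Theorem~\ref{thm:isotropic}, that $\mathcal{W}_\chi$ (defined via any good grading and any isotropic $\mathfrak{l}$) has associated graded $\gr^K\mathcal{W}_\chi\cong\mathbb{C}[\mathcal{S}]$, where the Slodowy slice $\mathcal{S}=\chi+(\ker\ad^*F)\circ\Pi$ depends only on the $\mathfrak{sl}(2)$-triple $\{e,h,f\}$ and not on the grading. For two good gradings $\Gamma,\Gamma'$ one can, after conjugating by an element of the centralizer $G_e$ (using that $G_e$ acts transitively on compatible $\mathfrak{sl}(2)$-triples by the Kostant section argument, as in \cite{BG}), arrange that they share the same graded $\mathfrak{sl}(2)$-triple; then both $\mathcal{W}_{\mathfrak{l}}$ and $\mathcal{W}_{\mathfrak{l}'}$ sit inside the same $Q_\chi=U(\g)/I$ after suitable choice of isotropic subspaces, and the comparison of Kazhdan filtrations shows they are equal (or, when the triples genuinely differ, one transports the whole construction by the supergroup element implementing the conjugation, which carries $I_{\mathfrak l}$ to $I_{\mathfrak{l}'}$ and hence $\mathcal{W}_{\mathfrak l}$ isomorphically to $\mathcal{W}_{\mathfrak{l}'}$).

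Assembling these, I would argue: given good gradings $\Gamma,\Gamma'$ of $\g$ for $\chi$, restrict to $\gl(N)$, apply \cite{BG} to get a chain $\Gamma=\Gamma^{(0)},\Gamma^{(1)},\dots,\Gamma^{(r)}=\Gamma'$ of good gradings of $\gl(N)$ with consecutive ones related by an elementary move, lift each to a good grading of $\g$ by Lemma~\ref{lem:grading-good-gl-q}, and conclude $\mathcal{W}_{\Gamma^{(i)}}\cong\mathcal{W}_{\Gamma^{(i+1)}}$ for each $i$ by the previous paragraph; composing gives $\mathcal{W}_\Gamma\cong\mathcal{W}_{\Gamma'}$. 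The main obstacle I anticipate is the bookkeeping in the single-elementary-move step: one must be careful that the conjugating group element (coming from the even subgroup $G_{\text{ev}}$, or from $Q(N)$ itself) simultaneously matches the $\mathfrak{sl}(2)$-triples \emph{and} sends one isotropic subspace $\mathfrak{l}\subset\g_{-1}$ into a valid isotropic subspace for the new grading, so that the identification of $W$-superalgebras is genuinely through an algebra isomorphism and not merely an abstract isomorphism of associated graded rings. This is precisely the point where the queer case must be checked to superize the argument of \cite[Section~5.6]{GG} and \cite{BG} without new phenomena, which it does because, by Lemma~\ref{lem:grading-good-gl-q}, all the relevant data for $\g$ is governed by the even part $\gl(N)$.
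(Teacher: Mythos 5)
Your overall strategy --- reduce to the even part via Lemma~\ref{lem:grading-good-gl-q}, connect the two good gradings of $\gl(N)$ by a chain of moves using Brundan--Goodwin, and finish with Theorem~\ref{thm:isotropic} --- is exactly the route the paper takes (its proof is a citation of \cite[Theorems~1 and~2]{BG} together with Theorem~\ref{thm:isotropic}). However, the mechanism you propose for the single elementary-move step has a genuine gap. First, for a \emph{fixed} nilpotent $e$, two distinct good gradings cannot in general be matched by conjugating $\mathfrak{sl}(2)$-triples by elements of $G_e$: good gradings other than the Dynkin one are not determined by an $\mathfrak{sl}(2)$-triple, and a conjugation fixing $e$ will not carry one grading to the other (if it did, the statement would be nearly vacuous). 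Second, the observation that $\text{gr}^K\mathcal{W}_{\mathfrak{l}}\cong\mathbb{C}[\mathcal{S}]$ holds for both gradings only produces an isomorphism of associated graded algebras, which --- as you yourself note --- does not yield an isomorphism of the filtered algebras; your proposed fix (transport by the conjugating supergroup element) circles back to the first problem.

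What \cite{BG} actually does for the key step is more elementary and avoids conjugation entirely: $e$ and $\chi$ are held fixed, and \cite[Theorem~2]{BG} shows that the good gradings for $e$ form a convex polytope, so $\Gamma$ and $\Gamma'$ are joined by a path along which, for each pair of adjacent gradings, one can choose isotropic subspaces $\mathfrak{l}$ and $\mathfrak{l}'$ (in the respective degree $-1$ components) such that the resulting nilpotent subalgebras $\m$ and $\m'$ --- hence the ideals $I_{\mathfrak{l}}$, the modules $Q_{\mathfrak{l}}$, and the invariant subalgebras --- literally coincide inside $U(\g)$. The $W$-superalgebras for adjacent gradings are then \emph{equal}, not merely abstractly isomorphic, and Theorem~\ref{thm:isotropic} removes the residual dependence on the choice of isotropic subspace within each fixed grading. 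To superize this you only need to check that coincidence of the even parts of the subalgebras forces coincidence of the odd parts, which follows from Lemma~\ref{lem:grading-inner} since the odd component of each graded piece is $\Pi$ applied to the even one. That is the step your write-up should supply in place of the $G_e$-conjugation argument.
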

\begin{proof}
The proof, which uses Theorem~\ref{thm:isotropic} and
\cite[Theorem~2]{BG}, is the same as proof of \cite[Theorem~1]{BG},
thus will be omitted.
\end{proof}

\subsection{Skryabin equivalence}
As we have established the independence of the $W$-superalgebras from the
choices of the isotropic subspaces $\mathfrak{l}$ and the good gradings, we will change the
notations for the generalized Gelfand-Graev module and the
$W$-superalgebra etc. to $Q_\chi, \mathcal W_\chi$ etc., to emphasize the
crucial dependence on $\chi$. In the remainder of this section, we
shall fix a nilpotent linear functional $\chi$ and a {\em
Lagrangian} subspace $\mathfrak{l}$ of $\g_{-1}$ once for all.

A $\g$-module $L$ is called a {\em Whittaker module} if $a
-\chi(a)$, $\forall a \in \m$, acts on $L$ locally nilpotently. A
{\em Whittaker vector} in a Whittaker $\g$-module $L$ is a vector $x
\in L$ which satisfies $(a -\chi(a)) x=0, \forall a \in \m. $

Let $\whmod$ be the category of finitely generated Whittaker
$\g$-modules with even homomorphisms.

Denote the subspace of all Whittaker vectors in $L$ by
$$
\wh (L) = \{v\in L \mid (a -\chi(a)) v=0, \forall a \in \m\}.
$$
Recall that $\mathcal{W}_{\chi} =(U(\g)/I_\chi)^{\text{ad}\, \m}$,
and we denote by $\bar{y} \in U(\g)/I_\chi$ the coset associated to
$y \in U(\g)$.
\begin{lemma}
\begin{enumerate}
\item Given a Whittaker $\g$-module $L$ with an action map
$\rho$, $ \wh (L)$ is naturally a $\mathcal{W}_{\chi}$-module by
letting $\bar{y}. v =\rho(y)v$ for $v\in \wh (L)$ and $\bar{y} \in
\mathcal{W}_{\chi}$.

\item For $M \in \Wmod$, $Q_\chi \otimes_{\mathcal{W}_{\chi}}M$ is a Whittaker
$\g$-module by letting
$$\quad y. (q \otimes v) =(y.q) \otimes v, \quad \text{ for }
y \in U(\g), q \in Q_\chi =U(\g)/I_\chi, v \in V.
$$
\end{enumerate}
\end{lemma}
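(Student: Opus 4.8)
The plan is to verify the two module structures directly from the definitions, checking in each case that the proposed action is well-defined and respects the relevant relations.

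For part (1), I would first check that $\wh(L)$ is preserved by the action of $\mathcal{W}_\chi$. Given $\bar y \in \mathcal{W}_\chi = (U(\g)/I_\chi)^{\ad\,\m}$ and $v \in \wh(L)$, I must show $\rho(y)v \in \wh(L)$, i.e. $(a-\chi(a))\rho(y)v = 0$ for all $\Z_2$-homogeneous $a \in \m$. Writing $(a-\chi(a))y = (-1)^{|a||y|}y(a-\chi(a)) + [a,y]$ in $U(\g)$ and using that $[a,y] \in I_\chi$ (the defining property of $\mathcal{W}_\chi \subseteq (U(\g)/I_\chi)^{\ad\,\m}$, which says $[a,y] \in I_\chi$ for $a \in \m \subseteq \m'$), together with $(a-\chi(a))v = 0$ and $I_\chi \cdot v = 0$ (since $v \in \wh(L)$ kills the generators $a-\chi(a)$ of the left ideal $I_\chi$, hence all of $I_\chi$), one gets $(a-\chi(a))\rho(y)v = 0$. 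Next I would check well-definedness of $\bar y . v := \rho(y)v$: if $\bar y_1 = \bar y_2$ then $y_1 - y_2 \in I_\chi$, so $\rho(y_1 - y_2)v \in I_\chi \cdot v = 0$. Finally associativity $\overline{y_1 y_2}.v = \rho(y_1 y_2)v = \rho(y_1)\rho(y_2)v = \bar y_1.(\bar y_2. v)$ and unitality are immediate, and the parity of the action is inherited from $U(\g)$, so $\wh(L)$ is a $\mathcal{W}_\chi$-module in the super sense.

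For part (2), the $\g$-module structure on $Q_\chi \otimes_{\mathcal{W}_\chi} M$ is by left multiplication on the first factor, so it is visibly a well-defined $U(\g)$-module (the tensor product is over $\mathcal{W}_\chi$, which acts on $Q_\chi$ on the right by right multiplication and commutes with the left $U(\g)$-action, so the action descends to the tensor product). It remains to check the Whittaker condition: for $a \in \m$ I must show $(a-\chi(a))$ acts locally nilpotently on $Q_\chi \otimes_{\mathcal{W}_\chi} M$. Here I would use the Kazhdan filtration: $Q_\chi$ carries the Kazhdan filtration $\{F^K_k Q_\chi\}$ with $F^K_k Q_\chi = 0$ for $k < 0$ (property (2) of the Kazhdan filtration listed in the excerpt), and left multiplication by $a - \chi(a)$ with $a \in \m$ lowers Kazhdan degree — indeed $\m = \mathfrak{l} \oplus \bigoplus_{i \leq -2}\g_i$ sits in Kazhdan-negative degrees, while $\chi(a) \in \C$ is a scalar in Kazhdan degree $0$ but $\chi(a) = 0$ unless $a \in \g_{-2}$, in which case the scalar term also has Kazhdan degree $-2+2 = 0 \leq$ ... — more carefully, $a - \chi(a)$ maps $F^K_k Q_\chi \to F^K_{k-1} Q_\chi$ for $a \in \mathfrak{l} \subset \g_{-1}$ and $F^K_k Q_\chi \to F^K_k Q_\chi$ with the leading term killed for $a \in \g_{\leq -2}$. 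Since each $F^K_k Q_\chi$ is finite-dimensional and the filtration is bounded below, iterating $a-\chi(a)$ eventually lands in $F^K_{-1}Q_\chi = 0$, giving local nilpotency on $Q_\chi$; tensoring with $M$ over $\mathcal{W}_\chi$ preserves this since the $\m$-action is entirely on the $Q_\chi$ factor. Alternatively, one observes $Q_\chi = U(\g)\otimes_{U(\m)}\C_\chi$ is, by PBW applied to a complement of $\m$ in $\g$, locally nilpotent for $a - \chi(a)$ directly from the triangular structure. I would present the Kazhdan-filtration argument as the cleaner route.

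The main obstacle, such as it is, is bookkeeping rather than conceptual: in part (1) one must be careful that the sign $(-1)^{|a||y|}$ arising from the supercommutator does not obstruct the cancellation (it does not, because the term $y(a-\chi(a))v$ vanishes outright), and one must invoke the correct defining property of $\mathcal{W}_\chi$, namely $\mathcal{W}_\chi = (U(\g)/I_\chi)^{\ad\,\m'} \subseteq (U(\g)/I_\chi)^{\ad\,\m}$ so that $[a,y] \in I_\chi$ for $a \in \m$; in part (2) the only subtlety is confirming that the Kazhdan degree of $a - \chi(a)$ is genuinely $\leq 0$ on $Q_\chi$ with the top-degree piece killed, which follows from $\chi$ being supported on $\g_{-2}$ together with property (2) above. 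None of these steps requires anything beyond the structure already set up in the excerpt.
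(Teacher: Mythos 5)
Your proposal is correct and follows exactly the standard argument the paper is deferring to (the paper gives no proof, citing the Lie algebra case in \cite[Lemma~35]{Wan}); part (1) in particular is complete, including the correct handling of the sign via the observation that $\chi(a)\neq 0$ forces $a$ even. The one spot in part (2) where your bookkeeping wobbles is the claim that $a-\chi(a)$ maps $F^K_kQ_\chi$ into $F^K_{k-1}Q_\chi$ for $a\in\mathfrak{l}\subset\g_{-1}$: your stated justification (``$\m$ sits in Kazhdan-negative degrees'') is false, since $a\in\g_{-1}$ has Kazhdan degree $+1$ and $a\in\g_{-2}$ has degree $0$, and the ``leading symbol vanishes'' observation by itself only buys one degree, giving $F^K_k\to F^K_k$, which does not yield nilpotency even on a finite-dimensional $F^K_k$. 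The clean fix, which is the argument in the cited sources, is the identity $(a-\chi(a))\cdot\overline{u}=\overline{[a,u]}$ in $Q_\chi$ for all $a\in\m$ and $u\in U(\g)$ (the term $(-1)^{|a||u|}\,\overline{u(a-\chi(a))}$ dies because $u(a-\chi(a))\in I_\chi$, again using that $\chi(a)\neq0$ forces $|a|=\bar 0$): since $\text{ad}\,a$ for $a\in\g_j$, $j\leq -1$, lowers Kazhdan degree by $|j|\geq 1$, iterating lands in $F^K_{<0}Q_\chi=0$. With that substitution your proof is complete; your alternative PBW/triangularity remark is also a valid route.
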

\begin{proof}
The proof is straightforward and is the same as the Lie algebra case. (see e.g. proof of \cite[Lemma~35]{Wan}).
\end{proof}

Let $\Wmod$ be the category of finitely generated $\mathcal
W_\chi$-modules with even homomorphisms. We define the {\em
Whittaker functor}
\begin{equation*}
\wh:  \whmod \longrightarrow \Wmod, \qquad L \mapsto \wh (L).
\end{equation*}
We define another functor
$$
Q_\chi\otimes_{\mathcal{W}_{\chi}} -:   \Wmod   \longrightarrow
\whmod, \qquad M \mapsto Q_\chi\otimes_{\mathcal{W}_{\chi}}M.
$$

\begin{theorem}\label{thm:skry}
The functor $Q_\chi \otimes_{\mathcal{W}_{\chi}} -:   \Wmod
\longrightarrow \whmod$ is an equivalence of categories, with $\wh:
\whmod \longrightarrow \Wmod$ as its quasi-inverse.
\end{theorem}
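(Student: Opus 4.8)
The plan is to exhibit the pair $(Q_\chi\otimes_{\mathcal{W}_\chi}-,\,\wh)$ as an adjunction and then to check that its unit and counit are natural isomorphisms, following Skryabin \cite{Skr} and Gan--Ginzburg \cite{GG}; all the genuinely super-theoretic input has already been assembled in Section~\ref{sec:defW}. First I would record the Frobenius reciprocity isomorphism $\Hom_\g(Q_\chi,L)\cong\Hom_{U(\m)}(\mathbb{C}_\chi,L)=\wh(L)$, which makes $\wh$ a right adjoint of $Q_\chi\otimes_{\mathcal{W}_\chi}-$ and returns exactly the $\mathcal{W}_\chi$-module structure on $\wh(L)$ described above. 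The unit is $\eta_M:M\to\wh(Q_\chi\otimes_{\mathcal{W}_\chi}M)$, $m\mapsto\bar{1}\otimes m$ (this lies in $\wh$ because $a-\chi(a)\in I_\chi$ for $a\in\m$), and the counit is $\epsilon_L:Q_\chi\otimes_{\mathcal{W}_\chi}\wh(L)\to L$, $\bar{y}\otimes v\mapsto y\cdot v$; note also that $Q_\chi\otimes_{\mathcal{W}_\chi}M$ is again finitely generated, so it lies in $\whmod$. It therefore suffices to show that $\eta_M$ and $\epsilon_L$ are isomorphisms for all $M\in\Wmod$ and $L\in\whmod$.

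The structural heart of the proof is the \emph{freeness} of $Q_\chi$ as a right $\mathcal{W}_\chi$-module. I would deduce this from Theorem~\ref{thm:isom-nu}: the Kazhdan-graded isomorphisms $\text{gr}^KQ_\chi\cong\mathbb{C}[\chi+\m^{*,\perp}]\cong\mathbb{C}[M']\otimes\mathbb{C}[\mathcal{S}]\cong\mathbb{C}[M']\otimes\text{gr}^K\mathcal{W}_\chi$ display $\text{gr}^KQ_\chi$ as a free graded module over $\text{gr}^K\mathcal{W}_\chi$; choosing homogeneous lifts of a homogeneous basis (with $\bar{1}$ lifting $1\otimes 1$) and running the standard filtered induction — legitimate since the Kazhdan filtration on $Q_\chi$ is non-negative — produces a free $\mathcal{W}_\chi$-basis of $Q_\chi$. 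In particular $Q_\chi\otimes_{\mathcal{W}_\chi}-$ is exact and faithful.

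To see that $\eta_M$ is an isomorphism I would pass to associated graded objects. Equip $M$ with a good filtration compatible with the Kazhdan filtration, and $Q_\chi\otimes_{\mathcal{W}_\chi}M$ with the tensor filtration; by the freeness just established, $\text{gr}^K(Q_\chi\otimes_{\mathcal{W}_\chi}M)\cong\text{gr}^KQ_\chi\otimes_{\text{gr}^K\mathcal{W}_\chi}\text{gr}^KM\cong\mathbb{C}[M']\otimes\text{gr}^KM$, with $\m$ acting only through the factor $\mathbb{C}[M']$, exactly as in the proof of Theorem~\ref{thm:isom-nu}. The Kazhdan-filtered cochain complex computing $H^\bullet(\m,Q_\chi\otimes_{\mathcal{W}_\chi}M)$ — the same spectral-sequence device as in that proof — together with Lemma~\ref{lem:A-dR-super} (which gives $H^0(\m,\mathbb{C}[M'])=\mathbb{C}$ and $H^{>0}(\m,\mathbb{C}[M'])=0$) then yields $\text{gr}^K\wh(Q_\chi\otimes_{\mathcal{W}_\chi}M)=(\mathbb{C}[M']\otimes\text{gr}^KM)^{\m}=\text{gr}^KM$, an identification induced by $\eta_M$; since the filtrations are exhaustive and bounded below, $\eta_M$ is an isomorphism.

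The counit $\epsilon_L$ is the step I expect to be the main obstacle, since it amounts to recovering an arbitrary finitely generated Whittaker module from its Whittaker vectors. I would fix a good filtration on $L$ compatible with the Kazhdan filtration on $U(\g)$ (available since $L$ is finitely generated); because each $a-\chi(a)$, $a\in\m$, acts locally nilpotently on $L$, the induced action on $\text{gr}^KL$ is multiplication by the linear function $\xi\mapsto(\xi-\chi)(a)$ on $\g^*$ (of the appropriate Kazhdan degree), which is therefore also locally nilpotent, forcing the support of $\text{gr}^KL$ into $\chi+\m^{*,\perp}$; hence $\text{gr}^KL$ is a module over $\mathbb{C}[\chi+\m^{*,\perp}]\cong\mathbb{C}[M']\otimes\text{gr}^K\mathcal{W}_\chi$. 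The same $\m$-cohomology computation as for $\eta$ then identifies $\text{gr}^KL\cong\mathbb{C}[M']\otimes\text{gr}^K\wh(L)\cong\text{gr}^KQ_\chi\otimes_{\text{gr}^K\mathcal{W}_\chi}\text{gr}^K\wh(L)$ compatibly with $\text{gr}^K\epsilon_L$, whence $\epsilon_L$ is an isomorphism. The only places where the super setting really enters are the Koszul-sign bookkeeping in the $(\g,\mathcal{W}_\chi)$-bimodule $Q_\chi$ and the vanishing of the higher Lie-superalgebra cohomology of $\m$ needed to degenerate the spectral sequences — but the latter is exactly what Lemma~\ref{lem:A-dR-super} and the proof of Theorem~\ref{thm:isom-nu} provide (ultimately the acyclicity of the super Koszul complex), so apart from these the argument copies \cite{Skr} and \cite{GG} verbatim; alternatively, Skryabin's original argument superizes directly and even removes the finite-generation hypothesis, as remarked in the introduction.
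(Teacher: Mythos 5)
Your proposal is correct and follows exactly the route the paper takes: the paper's proof of Theorem~\ref{thm:skry} simply defers to \cite[Theorem~6.1]{GG}, and what you have written is a faithful unfolding of that argument (adjunction, freeness of $Q_\chi$ over $\mathcal{W}_\chi$ via the Kazhdan-graded isomorphisms of Theorem~\ref{thm:isom-nu}, and the passage to associated graded objects together with the cohomology vanishing of Lemma~\ref{lem:A-dR-super} to show the unit and counit are isomorphisms). No discrepancy to report.
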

\begin{proof}
The proof, like the proof of Theorem~\ref{thm:isom-nu}, is the same as the Lie algebra case \cite[Theorem~6.1]{GG}, thus
will be omitted.
\end{proof}

\begin{remark}
We note here that Skryabin's approach \cite{Skr} can be generalized to our setting without difficulties. This allows us to remove the ``finite generation'' condition in Theorem~\ref{thm:skry}.
\end{remark}

\begin{remark}\label{rem:basicclassical}
Some of the constructions here in Section~\ref{sec:defW} admit natural generalizations in basic classical Lie superalgebras.
Let $\mathfrak{s}$ be a basic classical Lie superalgebra or the Lie superalgebra $\gl(m|n)$, and let $\chi\in \ev{\mathfrak{s}}^*$ be a nilpotent linear functional. We can define good gradings for $\chi$ and $\mathfrak{s}$ using (\ref{equ:gg1}) and (\ref{equ:gg2}). We start with a good grading for $\chi$, then there is a super-skewsymmetric bilinear form on the degree $-1$ component $\mathfrak{s}_{-1}$. We can pick an isotropic subspace $\mathfrak{l}$ and define the corresponding generalized Gelfand-Graev module and the $W$-superalgebra as we did in this section. One major difference from the queer Lie superalgebra is that $\dim \mathfrak{s}_{-1,\bar{1}}$ might be odd (as noted in \cite{WZ1}). When $\dim \mathfrak{s}_{-1,\bar{1}}$ is odd, Lemma~\ref{lem:A-dR-super} may go wrong. Nonetheless, when $\dim \mathfrak{s}_{-1,\bar{1}}$ is even, the proofs in this section all carry through to establish the fact that the definition of the $W$-superalgebra is independent of the choice of isotropic subspaces as well as the Skryabin equivalence.
\end{remark}





\end{document}